\newcommand{\grad}{\nabla}
\newcommand{\Laplacian}{\Delta}
\DeclareMathOperator{\diver}{div}
\DeclareMathOperator{\curl}{curl}
\DeclareMathOperator{\Tr}{Tr}
\DeclareMathOperator{\Jac}{Jac}
\newcommand{\Diff}{\mathrm{Diff}}
\newcommand{\Diffmu}{\Diff_{\mu}}
\newcommand{\id}{\mathrm{id}}
\newcommand{\llangle}{\langle\!\langle}
\newcommand{\rrangle}{\rangle\!\rangle}
\newcommand{\transpose}{\dagger}
\newtheorem{theorem}{Theorem}[section]
\newtheorem{proposition}[theorem]{Proposition}
\newtheorem{lemma}[theorem]{Lemma}
\begin{document}

\title{Lagrangian aspects of the axisymmetric Euler equation}

\author{Stephen C. Preston}
\address{Department of Mathematics, University of Colorado, Boulder, CO 80309-0395}
\email{Stephen.Preston@colorado.edu}
\author{Alejandro Sarria}
\address{Department of Mathematics, Williams College, Williamstown, MA 01267}
\email{Alejandro.Sarria@williams.edu}

\thanks{This research was supported in part by NSF grant \#1105660 and Simons Foundation grant \#1551547.}

\begin{abstract}
In this paper we are interested in geometric aspects of blowup in the axisymmetric 3D Euler equations with swirl on a cylinder.
Writing the equations in Lagrangian form for the flow derivative along either the axis or the boundary and imposing
oddness on the vertical component of the flow, we extend some blowup criteria due to Chae, Constantin, and Wu related
to assumptions on the sign of the pressure Hessian. In addition we give a geometric interpretation of the results,
both in terms of the local geometry along trajectories
and in terms of the Riemannian geometry of the volume-preserving diffeomorphism group. 
\end{abstract}

\maketitle

\section{Introduction}

The question of whether smooth solutions of the three-dimensional Euler equations can break down in finite time is a long-standing
open problem; see Constantin~\cite{constantinreview} for a history and survey of results. Even in the axisymmetric case,
where the velocity components do not depend on the angular coordinate $\theta$, the question is still open, although if
in addition the angular velocity is assumed to be zero (that is, axisymmetric flow without swirl), global existence is
well-known~\cite{majdabertozzi}. Numerical simulations of Luo-Hou~\cite{houluo} suggest very
strongly that axisymmetric solutions can blow up: their model features initial data where both the vertical and
angular velocities are odd in the vertical coordinate $z$, and they observe numerically a blowup at a fixed point
on the boundary. The increasingly common view among experts~\cite{taoblog, houconvection} is that functional analysis estimates
are not sufficient to establish blowup, and instead one must analyze the geometry of trajectories in a careful way and
use the special features of the Euler equation. We believe in addition that the Riemannian structure of the equation,
as a geodesic equation on the group of volumorphisms as found by Arnold~\cite{arnold}, is
also quite useful in both analyzing the local geometry and in finding simpler models with the same behavior. 

Our configuration manifold is the solid torus $M = D\times S^1 = \{(x,y,z) \, \vert \, x^2+y^2\le 1, -\pi \le z\le \pi\}/\{(x,y,-\pi)\equiv (x,y,\pi)\}$. The Euler equation for an axisymmetric velocity field $U$ which is tangent to $\partial M$ is given by
\begin{equation}\label{eulergeneral}
U_t + U\cdot \nabla U = -\grad P, \qquad \diver{U} = 0, \qquad \partial_{\theta}U = 0, \qquad \langle U,e_r\rangle_{r=1} = 0, \qquad U_{t=0} = U_0,
\end{equation}
where the pressure $P$ is determined nonlocally by the equation
\begin{equation}\label{pressuregeneral}
\Laplacian P = -\diver{(U\cdot \nabla U)} = -\Tr{(\nabla U)^2}, \qquad \langle \grad P, e_r\rangle_{r=1} = -\langle U\cdot \nabla U, e_r\rangle_{r=1}.
\end{equation}
The vorticity $\omega = \curl{U}$ satisfies the conservation law
\begin{equation}
\label{vorticity}
\omega_t + [U,\omega] = 0.
\end{equation}
It has been known since the work of Ebin and Marsden \cite{ebinmarsden} that the Euler equations have $H^s$ solutions for $s>5/2$ on some time interval $[0,T)$ for 
initial data $U_0$ in $H^s$.
The most famous criterion for extensibility of solutions past time $T$ is the Beale-Kato-Majda result~\cite{BKM} that
\begin{equation}\label{BKM}
\int_0^T \lVert \omega(t)\rVert_{L^{\infty}} \, dt <\infty.
\end{equation}

Expressing $U$ in components as $U = ae_r + be_{\theta} + ce_z$, we observe in Proposition
\ref{oddeven} that if initially $c$ is odd in $z$, $a$ is even in $z$, and $b$ is either even or odd in $z$,
then the same remains true as long as the solution persists. In either case the points where $z=0$ and either
$r=0$ or $r=1$ become fixed points. Criteria for either blowup or global existence in a neighborhood of these
fixed points have been studied by Chae~\cite{chae1}\cite{chae2} and by \cite{chaeconstantinwu} in the case where
$b$ is odd, but to date we do not know of any other analysis of the case where $b$ is even; as the latter case allows the
vorticity to be nonzero at the fixed point, we believe it is quite interesting. For example, we prove the following theorem
that severely restricts the possibilities for blowup at the origin when the angular velocity is even and nonzero.

\begin{theorem}\label{alpharswirlaxistheorem}
Suppose $c_0$ is odd in $z$, and that $a_0$ and $b_0$ are both even in $z$. Then the origin is a fixed point of the flow \eqref{flowequation}.
Assume that the solution of \eqref{eulergeneral} ceases to exist at time $t=T$ due to pointwise blowup of vorticity at the origin; that is,
assume $\int_0^T \lvert \omega(t,\mathbf{0})\rvert \, dt = \infty$. Then the pressure function $P$ given by \eqref{pressuregeneral}
must satisfy $\limsup_{t\to T} (T-t)^2 P_{rr}(t,\mathbf{0}) \ge \tfrac{1}{4}$.
\end{theorem}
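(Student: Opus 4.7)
My plan is to reduce the blowup question at the origin to a scalar complex Riccati ODE, linearize it via a Cole--Hopf-type substitution, and then rule out the assumed blowup by a self-similar rescaling argument on the resulting amplitude equation. First, using Proposition~\ref{oddeven}, near the origin I would Taylor expand $a = A(t,z)r + O(r^3)$, $b = B(t,z)r + O(r^3)$, $c = C_0(t,z) + O(r^2)$ with $C_0(t,0)=0$, substitute into the $r$- and $\theta$-components of the cylindrical Euler equations, divide by $r$, and evaluate along the (fixed) origin trajectory to derive the autonomous system
\begin{equation*}
A'(t) = -A(t)^2 + B(t)^2 - Q(t), \qquad B'(t) = -2 A(t) B(t),
\end{equation*}
where $A(t)=\partial_r a(t,\mathbf{0})$, $B(t)=\partial_r b(t,\mathbf{0})$, and $Q(t)=P_{rr}(t,\mathbf{0})$. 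Since $\omega(t,\mathbf{0}) = 2B(t)\,e_z$ and $B(t) = B_0 \exp\bigl(-2\int_0^t A\,ds\bigr)$, the hypothesis $\int_0^T |\omega(t,\mathbf{0})|\,dt = \infty$ forces $B_0 \ne 0$.

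The second step is linearization. The complex combination $w = A + iB$ satisfies the scalar Riccati equation $w' = -w^2 - Q(t)$, which under $w = u'/u$ becomes the linear ODE $u'' + Q(t)u = 0$ with $u(0)=1$, $u'(0) = A_0 + iB_0$. The Wronskian $W = \mathrm{Im}(\bar u u')$ of the real and imaginary parts of $u$ is conserved and equals $B_0 \ne 0$. Writing $u = R e^{i\theta}$ yields $A = R'/R$ and $B = \theta' = B_0/R^2$, and using $|u'|^2 = R'^2 + B_0^2/R^2$ together with $(R^2)'' = -2QR^2 + 2|u'|^2$ gives the amplitude equation
\begin{equation*}
R''(t) + Q(t)\,R(t) = \frac{B_0^2}{R(t)^3}.
\end{equation*}
The blowup hypothesis becomes the non-integrability statement $\int_0^T dt/R(t)^2 = \infty$.

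The third and central step is self-similar rescaling for contradiction. Assume $\limsup_{t\to T}(T-t)^2 Q(t) < \lambda^2$ with $\lambda < 1/2$, and set $\sigma = -\ln(T-t)$, $\tilde R(\sigma) = (T-t)^{-1/2} R(t)$. A direct computation shows $\int_0^T dt/R^2 = \int^\infty d\sigma/\tilde R^2$ and transforms the amplitude equation into
\begin{equation*}
\tilde R''(\sigma) = \bigl(\tfrac{1}{4} - q(\sigma)\bigr)\tilde R(\sigma) + \frac{B_0^2}{\tilde R(\sigma)^3}, \qquad q(\sigma) := (T-t)^2 Q(t).
\end{equation*}
For $\sigma$ large, $q(\sigma) < \lambda^2$, so $\tilde R''(\sigma) \ge \mu^2 \tilde R(\sigma) + B_0^2/\tilde R(\sigma)^3$ with $\mu := \sqrt{1/4 - \lambda^2} > 0$. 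In particular $\tilde R$ is strictly convex, and the term $B_0^2/\tilde R^3$ prevents $\tilde R$ from staying bounded or decaying to zero, so $\tilde R' > 0$ eventually. Multiplying the inequality by $\tilde R'$ and integrating then yields $\tilde R'^2 \ge \mu^2 \tilde R^2 + O(1)$ and hence $\tilde R(\sigma) \ge c\, e^{\mu\sigma/2}$ for $\sigma$ large, making $\int d\sigma/\tilde R^2 < \infty$ and contradicting the hypothesis.

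The main obstacle will be the final ODE analysis, since the differential inequality $\tilde R'' \ge \mu^2 \tilde R + B_0^2/\tilde R^3$ does not by itself imply exponential growth: a naive Sturm comparison against $v'' = \mu^2 v$ is compatible with a decaying branch $v \sim e^{-\mu\sigma}$. The role of the repulsive centrifugal term $B_0^2/\tilde R^3$---which is the algebraic trace of the conserved Wronskian $B_0 \ne 0$, and therefore of the nonvanishing initial vorticity at the origin---is precisely to serve as a coercive barrier that rules out the trapped or decaying regimes and forces $\tilde R'$ to become and stay positive. Making this intuition rigorous, by carefully tracking cases depending on the sign of $\tilde R'(\sigma_*)$ where the estimate $q(\sigma) < \lambda^2$ first takes effect, is where the bulk of the work will lie.
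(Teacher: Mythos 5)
Your argument is correct, and its skeleton coincides with the paper's: everything hinges on the Ermakov--Pinney equation at the origin and on the self-similar substitution $\sigma=-\ln(T-t)$, $R=(T-t)^{1/2}\tilde R$. Your Eulerian Riccati system together with the Cole--Hopf substitution $w=u'/u$ is a re-derivation of equation \eqref{axisBeven} of Proposition \ref{pointwiselinear} (your $R$ is $\alpha_r(t,0,0)$ and $B_0=\mathfrak{b}_0$), and your linear equation $u''+Qu=0$ is exactly the planar central-force system the paper invokes via Eliezer--Gray, so that portion could be compressed by citing the proposition. Where you genuinely diverge is the endgame: the paper turns $\int_0^T dt/\alpha_r^2=\infty$ into infinite winding of the planar system, concludes that \emph{every} solution of the scalar linear equation $\ddot g=-P_{rr}g$ oscillates infinitely often up to $T$, and rules this out by a nonoscillation (disconjugacy) argument for $j''=(\tfrac14-H)j$; you instead stay with the nonlinear amplitude equation $\tilde R''=(\tfrac14-q)\tilde R+B_0^2/\tilde R^3$ and argue by convexity that $\tilde R$ must eventually grow, contradicting $\int^{\infty} d\sigma/\tilde R^2=\infty$. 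Both routes are valid, and yours is somewhat more elementary in that it needs no Sturm oscillation theory. Moreover, the ``main obstacle'' you flag is not really one: once $q<\lambda^2<\tfrac14$, the right-hand side $\mu^2\tilde R+B_0^2/\tilde R^3$ is bounded below on $(0,\infty)$ by the positive constant $\tfrac{4}{3^{3/4}}\,\mu^{3/2}|B_0|^{1/2}$, so $\tilde R''\ge m>0$ outright; hence $\tilde R'\to+\infty$, $\tilde R$ grows at least linearly in $\sigma$, and linear growth already makes $\int^{\infty} d\sigma/\tilde R^2$ finite --- no exponential estimate and no case analysis on the sign of $\tilde R'$ is needed. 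The only points to state explicitly are that $B_0\ne 0$ and $\int_0^T dt/R^2=\infty$ both follow from $\int_0^T\lvert\omega(t,\mathbf{0})\rvert\,dt=\infty$ (as you note, since $\omega(t,\mathbf{0})=2B(t)e_z$ with $B=B_0/R^2$), and that $R=\alpha_r(t,0,0)>0$ on $[0,T)$ because $\eta(t)$ is a diffeomorphism there (equivalently, because the conserved Wronskian $B_0\ne0$ prevents $u$ from vanishing).
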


Superficially it seems easier to achieve blowup in the case where the angular velocity $b$ is odd than when it is even, especially in
light of Theorem \ref{alpharswirlaxistheorem}. However the situation changes dramatically on the boundary $r=1$ of the cylinder. In that case
numerical evidence~\cite{houluo} shows that the solution of \eqref{eulergeneral} ends in finite time due to blowup of \emph{odd} data, yet our
result below shows that blowup on the boundary is actually easier in the even case.

\begin{theorem}\label{blowupiseasy}
Let $\mathfrak{b}_1 = b_0(1,0)$ and $\mathfrak{b}_2 = (b_0)_r(1,0)$, and assume that $2\mathfrak{b}_1(\mathfrak{b}_1+\mathfrak{b}_2)=-c^2<0$ and $\mathfrak{a}=a_r(0,1,0)<0$. Then
either of the following is sufficient to ensure blowup:
\begin{itemize}
\item $-k^2 \le P_{rr}(t,1,0) + 3\mathfrak{b}_1^2 \le 0$, where $k$ is the positive solution of $k(k+\mathfrak{a})=c^2\ln{2}$, for all time; or
\item $P_{rr}(t,1,0) + 3\mathfrak{b}_1^2 \ge 0$ for all time.
\end{itemize}
\end{theorem}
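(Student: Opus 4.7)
The plan is to reduce the theorem to a scalar linear second-order ODE by tracking Lagrangian-type quantities at the fixed point $(r,z)=(1,0)$. Since $a$ is even in $z$ and vanishes on $r=1$, and $c$ is odd in $z$, evaluating the Euler system at $(1,0)$ gives $a(t,1,0)=c(t,1,0)=0$ for all $t$, while the $\theta$-component reduces to $\partial_t b=0$, so $b(t,1,0)=\mathfrak{b}_1$ is constant. Letting $A(t)=a_r(t,1,0)$ and $B(t)=b_r(t,1,0)$, I would differentiate the $r$- and $\theta$-components of \eqref{eulergeneral} in $r$ and evaluate at $(1,0)$ to obtain the closed Riccati-type pair
\begin{align*}
\dot A+A^2+\mathfrak{b}_1^2-2\mathfrak{b}_1 B &= -P_{rr}(t,1,0),\\
\dot B &= -A(B+\mathfrak{b}_1).
\end{align*}
The combination $\phi:=B+\mathfrak{b}_1=\omega_z(t,1,0)$ is precisely the vertical vorticity at the fixed point; the second equation integrates to $\phi(t)=\phi(0)\exp(-\int_0^t A\,ds)$, and the hypothesis reads $2\mathfrak{b}_1\phi(0)=-c^2$.

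The key simplification is the substitution $\chi(t)=\exp(\int_0^t A\,ds)=\phi(0)/\phi(t)$, for which $\chi(0)=1$, $\dot\chi(0)=\mathfrak{a}$, and $\dot\chi=A\chi$. One computes $\ddot\chi=(\dot A+A^2)\chi=(2\mathfrak{b}_1\phi-Q)\chi$, where $Q(t):=P_{rr}(t,1,0)+3\mathfrak{b}_1^2$, and since $2\mathfrak{b}_1\phi\chi=2\mathfrak{b}_1\phi(0)=-c^2$, the full system collapses to the scalar linear ODE
\[
\ddot\chi+Q(t)\chi=-c^2,\qquad \chi(0)=1,\ \dot\chi(0)=\mathfrak{a}<0.
\]
Vorticity blowup at the fixed point is equivalent to $\chi$ reaching zero in finite time.

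In the second case ($Q\ge0$) concavity is immediate: $\ddot\chi\le-c^2<0$ as long as $\chi\ge0$, and with $\chi(0)=1, \dot\chi(0)<0$ this forces $\chi$ to hit zero by $t\le(-\mathfrak{a}+\sqrt{\mathfrak{a}^2+2c^2})/c^2$. In the first case ($-k^2\le Q\le0$), while $\chi\ge0$ we have $\ddot\chi\le k^2\chi-c^2$, and I would compare with the explicit solution $y(t)=\alpha e^{kt}+\beta e^{-kt}+c^2/k^2$ of $\ddot y=k^2 y-c^2$, $y(0)=1, \dot y(0)=\mathfrak{a}$, where $\alpha=(k(k+\mathfrak{a})-c^2)/(2k^2)$. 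A Sturm-type argument (the difference $z=y-\chi$ satisfies $\ddot z-k^2 z\ge0$ with zero initial data, hence $z\ge0$) yields $\chi\le y$ wherever both are positive. The threshold $k(k+\mathfrak{a})=c^2\ln 2<c^2$ makes $\alpha=c^2(\ln 2-1)/(2k^2)<0$, so $y\to-\infty$ as $t\to\infty$ and crosses zero at a finite time, forcing $\chi$ to do the same.

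The main obstacle is the initial reduction: bookkeeping all the parity- and boundary-induced vanishings to extract the Riccati pair above, and recognizing the linearizing substitution $\chi=\exp(\int_0^t A\,ds)$, which converts the coupled nonlinear system into a single linear second-order ODE in which the pressure Hessian enters only through the shifted quantity $Q$. Once this algebraic reduction is in hand, both blowup cases follow from elementary ODE comparison.
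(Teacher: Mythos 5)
Your proof is correct, and it is worth separating what coincides with the paper from what is genuinely different. Your Eulerian reduction --- the Riccati pair for $A=a_r(t,1,0)$, $B=b_r(t,1,0)$ together with the substitution $\chi=\exp\bigl(\int_0^t A\,ds\bigr)$ --- is in substance a re-derivation of the boundary case of Proposition \ref{pointwiselinear}: since $\alpha_{tr}(t,1,0)=a_r(t,1,0)\,\alpha_r(t,1,0)$ at the fixed point, your $\chi$ is exactly $f_1(t)=\alpha_r(t,1,0)$ and your linear ODE $\ddot\chi+Q\chi=-c^2$ is exactly \eqref{gequation}; the paper reaches it by differentiating \eqref{alphagammaeqs} in $r$ and invoking the conservation law $\alpha b=rb_0$, so both derivations are valid and land on the same equation. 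The case $Q\ge 0$ is then handled as in the paper. The genuine difference is the first case: the paper bounds the homogeneous solution by $1\le y_1\le\cosh kt$ (Sturm), uses reduction of order and variation of parameters, and the asymptotic evaluation of the resulting double integral is precisely what produces the constant $c^2\ln 2$; you instead compare directly with the extreme constant-coefficient problem $\ddot y=k^2y-c^2$, and your positivity argument is sound, since $z=y-\chi$ satisfies $\ddot z-k^2z=(k^2+Q)\chi\ge 0$ with vanishing initial data (using only $Q\ge -k^2$ and $\chi>0$), so $\chi\le y$ by positivity of the propagator of $\partial_t^2-k^2$. This is simpler and in fact slightly stronger: $y$ has a finite first zero as soon as $k(k+\mathfrak{a})<c^2$, so your argument gives blowup for any such $k$, of which the stated $k(k+\mathfrak{a})=c^2\ln 2$ is a special case, and you never need the upper bound $Q\le 0$ there. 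Two small points: with your convention $\mathfrak{a}<0$ the hitting-time bound in the second case should be $(\mathfrak{a}+\sqrt{\mathfrak{a}^2+2c^2})/c^2$ (what you wrote is still an upper bound, just not sharp); and the blowup conclusion is best closed by noting that $\chi=\alpha_r(t,1,0)$ remains strictly positive as long as the solution is smooth --- equivalently $\omega_z(t,1,0)=(\mathfrak{b}_1+\mathfrak{b}_2)/\chi$ remains finite, or $f_1g_1=1$ forces $\gamma_z\to\infty$ --- so smooth existence past the first zero of the comparison function is impossible, which is the same implicit step the paper takes when it concludes from $f_1$ becoming negative.
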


It is also possible to prove blowup based on assumptions about the time derivative of pressure;
although such assumptions are difficult to justify physically, it is interesting that relatively
mild assumptions can have rather strong consequences. For example we show the following theorem.

\begin{theorem}\label{forcederivative}
Suppose $c_0$ is odd in $z$ and $a_0$ is even in $z$, with $b_0$ either even or odd in $z$.
Assume that $P_{zz}(0,1,0)<0$, that $c_z(0,1,0)<0$, and that $c_z(0,1,0)^2 + P_{zz}(0,1,0)>0$.
If $P_{zz}(t,1,0)$ is increasing for all time, then the solution of \eqref{eulergeneral} ends in finite time.
The same is true if $(t,1,0)$ is replaced everywhere with $(t,0,0)$.
\end{theorem}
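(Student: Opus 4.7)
The plan is to extract a scalar Riccati-type ODE for $\gamma(t) := c_z(t,\mathbf{p})$ at the Lagrangian fixed point $\mathbf{p}$, namely $(1,0)$ or $(0,0)$, and then use the monotonicity hypothesis on $P_{zz}$ to force finite-time blowup by comparison with an autonomous Riccati. By Proposition~\ref{oddeven}, the parity of $c$ (odd) and $a$ (even) in $z$ is preserved in time, so $c(t,r,0) \equiv 0$ and $a_z(t,r,0) \equiv 0$. Combined with $a(t,1,0)=0$ (the tangency condition at $r=1$) or $a(t,0,z) \equiv 0$ (smoothness of an axisymmetric field on the axis), this makes $\mathbf{p}$ a fixed point of the Lagrangian flow for as long as the solution persists.

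The key step is to differentiate the $z$-component of \eqref{eulergeneral}, namely $c_t + a c_r + c c_z = -P_z$, with respect to $z$ and evaluate at $\mathbf{p}$. The cross terms $a_z c_r$, $a c_{rz}$, and $c c_{zz}$ each vanish by the parity and fixed-point identities above, yielding
\[ \dot\gamma(t) + \gamma(t)^2 = -P_{zz}(t,\mathbf{p}). \]
Setting $f(t) := -P_{zz}(t,\mathbf{p})$, the standing hypotheses become $f(0) > 0$, $\gamma(0) < -\sqrt{f(0)}$ (from $\gamma(0) < 0$ and $\gamma(0)^2 > f(0)$), and $f(t) \le f(0)$ for all $t \ge 0$.

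I then compare $\gamma$ with the solution $z(t)$ of the autonomous Riccati $\dot z = f(0) - z^2$, $z(0) = \gamma(0)$. The difference $w := \gamma - z$ satisfies the linear equation $\dot w + (\gamma+z)\,w = f(t) - f(0) \le 0$ with $w(0) = 0$, so the positive integrating factor $\mu(t) = \exp\!\big(\int_0^t (\gamma+z)\,ds\big)$ yields $(\mu w)' \le 0$ and hence $w(t) \le 0$; that is, $\gamma(t) \le z(t)$ on the common interval of existence. Since $z(0) < -\sqrt{f(0)}$ is subcritical, the autonomous Riccati is explicitly integrable via a $\coth$-substitution and blows up to $-\infty$ at an explicit finite $T^{\ast}$, forcing $\gamma(t) \to -\infty$ no later than $T^{\ast}$. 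Since $c_z$ is a component of $\grad U$, this drives $\lVert \grad U \rVert_{L^{\infty}}$ to infinity, which by the Ebin-Marsden theorem precludes continuation of the $H^s$ solution ($s > 5/2$). The axis case is identical once one invokes $a(t,0,z) \equiv 0$; the only real obstacle is the careful bookkeeping to confirm that every cross term in the $z$-derivative of the momentum equation vanishes at $\mathbf{p}$, after which the argument reduces to a standard Riccati comparison.
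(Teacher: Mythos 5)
Your proposal is correct, but it takes a genuinely different route from the paper's. The paper argues in Lagrangian variables: by Proposition \ref{pointwiselinear}, $g(t)=\gamma_z(t,r_0,0)$ solves the linear second-order equation $g''(t)=-Q(t)g(t)$ with $Q(t)=P_{zz}(t,r_0,0)$, $g(0)=1$, $g'(0)=c_z(0,r_0,0)=-\mathfrak{a}<0$; multiplying by $g'$ and integrating gives $g'(t)^2+Q(t)g(t)^2=\nu^2+\int_0^t Q'(\tau)g(\tau)^2\,d\tau$ with $\nu^2=\mathfrak{a}^2+Q(0)>0$, and the monotonicity $Q'\ge 0$ yields $g'(t)\le-\nu$ while $g$ is positive and decreasing, so $g(t)\le 1-\nu t$ and $\gamma_z$ vanishes by time $1/\nu$; the constraints $f_1g_1=1$ (resp. $f_0^2g_0=1$) then force $\alpha_r\to\infty$ and the solution ends. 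You instead extract the Eulerian Riccati equation $\dot\gamma+\gamma^2=-P_{zz}$ for $\gamma(t)=c_z$ at the fixed point $(1,0)$ or $(0,0)$ --- precisely the Chae-style Riccati for the velocity gradient that the paper deliberately trades for the linear ODE on $D\eta$ --- and the two formulations are equivalent via the logarithmic derivative $\gamma=g'/g$; your bookkeeping of the cross terms ($a=a_z=c=0$ at the fixed point, by parity, tangency at $r=1$, and regularity on the axis, all propagated by Proposition \ref{oddeven}) is exactly the same structural input the paper uses, and your comparison with the frozen autonomous Riccati $\dot z=f(0)-z^2$, $z(0)=\gamma(0)<-\sqrt{f(0)}$, is sound. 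What each approach buys: your argument only uses $P_{zz}(t)\ge P_{zz}(0)$, a strictly weaker consequence of the hypothesis than the pointwise $Q'\ge 0$ that enters the paper's integral identity, and it produces the explicit bound $T\le\tfrac{1}{\sqrt{f(0)}}\arctanh\big(\sqrt{f(0)}/\lvert c_z(0)\rvert\big)$; the paper's version gives the cleaner bound $T\le 1/\nu$ and exhibits the blowup directly as a component of $D\eta$ reaching zero (so $\alpha_r\to\infty$ through the volume constraint), which is the form the rest of the paper works with. One small streamlining: at the end you do not need the Ebin--Marsden/continuation appeal --- if the $H^s$ solution persisted beyond your $T^{\ast}$, then $c_z(\cdot,1,0)$ would be continuous, hence bounded, on $[0,T^{\ast}]$, contradicting $\gamma\le z\to-\infty$ as $t\to T^{\ast}$.
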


The motivation and context for the results described above comes primarily from the geometric point of view,
in particular in terms of the local geometric behavior near a fixed particle trajectory.
Generally, particle trajectories are given in terms of the Lagrangian flow $\eta\colon [0,T)\times M\to M$ by
\begin{equation}\label{flowequation}
\eta_t(t,x) = U\big(t, \eta(t,x)\big), \qquad \eta(0,x) = x.
\end{equation}
The divergence-free condition $\diver{U}=0$ 
implies that the Jacobian is preserved:
\begin{equation}\label{Jacobian}
\Jac(\eta) = \det{(D\eta)} \equiv 1.
\end{equation}
The vorticity transport equation \eqref{vorticity} becomes, in Lagrangian form, the equation
$\omega\big(t,\eta(t,x)\big) = D\eta(t,x) \omega_0(x)$,
from which it is clear by \eqref{BKM} that it is sufficient to understand when $D\eta$ approaches
infinity (or equivalently zero). Combining \eqref{eulergeneral} and
\eqref{flowequation} we obtain the Lagrangian form of the Euler equation,
\begin{equation}\label{lagrangianfluid}
\frac{D}{\partial t}\eta_t(t,x) = -\grad P\big(t, \eta(t,x)\big),
\end{equation}
in terms of the covariant derivative. Differentiating \eqref{lagrangianfluid} in space, we see that $D\eta$ satisfies the linear ODE
\begin{equation}\label{jacobiequation}
\frac{D}{\partial t}\frac{D}{\partial t}D\eta(t,x) = -\nabla^2 P\big(t,\eta(t,x)\big) \, D\eta(t,x),
\end{equation}
(The equation in this form is due to Ohkitani~\cite{ohkitani}.)

We now describe the Riemannian geometric interpretation of equation \eqref{jacobiequation} (based on the work
of Arnold~\cite{arnold}). 
The configuration space is the space of volume-preserving diffeomorphisms (volumorphisms) of $M$, a group under composition,
with right-invariant $L^2$ Riemannian metric given by the kinetic energy; geodesics in this metric are given by
curves $\eta$ satisfying the flow equation \eqref{flowequation} with velocity field $U$ solving the Euler equation
\eqref{eulergeneral}.
The solution operator $U_0\mapsto \eta(1)$ takes $H^s$ velocity fields to $H^s$ volumorphisms for $s>\tfrac{5}{2}$,
and this map is called the Riemannian exponential map. Ebin-Marsden~\cite{ebinmarsden} showed that this map is $C^{\infty}$, and Ebin-Misio{\l}ek-Preston~\cite{ebinmisiolekpreston} showed that the map is not Fredholm, which means that its
differential does not have finite-dimensional kernel or cokernel in general.

The failure of Fredholmness is related to the possibility of infinite clusters of conjugate points, and in fact we
see that accumulation of vorticity up to a blowup time $T$ generically implies the existence of pairs $t_n, t_{n+1}$ such
that the geodesic $\eta$ fails to minimize on $[t_n,t_{n+1}]$ for a sequence $t_n\nearrow T$. This should be viewed as
a ``positive curvature blowup,'' since it cannot happen if the Riemannian sectional curvature is bounded above. In case
this does not happen, we get fairly precise information on the growth of eigenvalues of the stretching matrix~\cite{prestonblowup}.


\begin{theorem}\label{conjugateblowupyes}
Suppose $c_0$ is odd and $b_0$ is even, with $\int_0^T \lvert \omega(t,r_0,0)\rvert \, dt=\infty$ for either $r_0=0$ or $r_0=1$.
Then there is an infinite increasing sequence $t_n\nearrow T$ with $\eta(t_n)$ conjugate to $\eta(t_{n+1})$ if either
\begin{itemize}
\item $r_0=0$ or
\item $r_0=1$ and $4b_0(1,0)^2 + 4b_0(1,0) (b_0)_r(1,0)>\lvert \alpha_{rt}(T,1,0)\rvert^2$,
\end{itemize}
where $\alpha$ is the radial component of the diffeomorphism $\eta$. 
In either case the supremum of sectional curvatures $\sup_V K(U,V)$ approaches positive infinity in the sense that 
$$\int_0^T \sup_{V\in T_{\id}\Diffmu(M)} K(U(t),v) \, dt = \infty.$$
\end{theorem}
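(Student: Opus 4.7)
The plan is to reduce the Jacobi equation on $\Diffmu(M)$ along $\eta$ to a scalar second-order linear ODE tied to the fixed particle $x_0=(r_0,0,0)$, and then use the vorticity blowup together with the explicit hypotheses to force oscillation of this ODE, yielding both the conjugate pairs and the divergence of $\int\sup K$.

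The reduction should go as follows. By Proposition~\ref{oddeven} and axisymmetry, $\eta(t,x_0)\equiv x_0$, so $A(t):=D\eta(t,x_0)$ solves the linear matrix ODE \eqref{jacobiequation} with coefficient $-\nabla^2 P(t,x_0)$; the parity hypotheses force this Hessian to be block-diagonal between the angular block and the $(r,z)$ block, with further entries killed by symmetry. I would then seek Jacobi fields on $\Diffmu(M)$ of separated form $V(t,\cdot)=f(t)\,\phi(\cdot)$, with $\phi$ a fixed divergence-free axisymmetric vector field of the appropriate parity concentrated near $x_0$ (a radial ``breathing mode''). Within the finite-dimensional invariant subspace carved out by axisymmetry and parity, the infinite-dimensional Jacobi equation should collapse to
\[
\ddot f(t) + p(t)\,f(t) = 0,
\]
where $p(t)$ combines $P_{rr}(t,x_0)$ with the relevant angular-velocity data at $x_0$; on the boundary the latter contribution is precisely $4b_0(1,0)^2+4b_0(1,0)(b_0)_r(1,0)$, accounting for the threshold in the theorem.

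Next I would show that $f$ has zeros accumulating at $T$. The Lagrangian identity $\omega(t,x_0)=A(t)\omega_0(x_0)$ converts the blowup hypothesis into unboundedness of $A(t)$ on $[0,T)$, and since $\det A\equiv 1$ at least one scalar projection of $A$ must decay to zero. A Sturm comparison then rules out a non-oscillatory regime for the scalar equation: in the boundary case, the hypothesis $4b_0(1,0)^2+4b_0(1,0)(b_0)_r(1,0)>|\alpha_{rt}(T,1,0)|^2$ is precisely what prevents the radial strain $\alpha_{rt}$ from overwhelming the centripetal term and leaving $p(t)$ effectively non-positive; on the axis the analogous positivity is automatic. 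This produces the increasing sequence $t_n\nearrow T$ of zeros of $f$. The main obstacle is then upgrading each consecutive zero pair $(t_n,t_{n+1})$ of the scalar function $f$ to an honest conjugate pair $(\eta(t_n),\eta(t_{n+1}))$ in the infinite-dimensional group $\Diffmu(M)$; I would address this by checking that the product ansatz $V=f\phi$ defines a genuine Jacobi field on $\Diffmu(M)$, so that $f(t_n)=0$ forces $V(t_n,\cdot)$ to vanish as an element of $T_{\eta(t_n)}\Diffmu(M)$ rather than just at $x_0$, adapting the symmetric-subspace construction from \cite{prestonblowup}.

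Finally, for the curvature claim, Arnold's formula identifies $K(U(t),\phi)$ with a positive multiple of $p(t)$ along the two-plane spanned by $U(t)$ and $\phi$, and the oscillation analysis forces the positive part of $p$ to have infinite integral on $[0,T)$ — otherwise a Gr\"onwall estimate applied to $\ddot A+\nabla^2 P(t,x_0)A=0$ together with $\det A\equiv 1$ would keep $A$ bounded, contradicting the vorticity blowup. Consequently $\int_0^T\sup_V K(U(t),V)\,dt\ge\int_0^T p(t)_+\,dt=\infty$, as claimed.
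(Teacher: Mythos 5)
There is a genuine gap at the core of your reduction. Your plan rests on a separated ansatz $V(t,\cdot)=f(t)\phi(\cdot)$ being a \emph{genuine} Jacobi field on $\Diffmu(M)$, so that zeros of $f$ become conjugate points. But the Jacobi equation on the volumorphism group is nonlocal: by the Gauss--Codazzi formula \eqref{gausscodazzi} the curvature operator involves the solution $Q$ of a Neumann problem over all of $M$, so the equation does not leave invariant any finite-dimensional subspace of fields supported near the fixed point, and a product ansatz concentrated near $x_0$ will not satisfy it. The paper avoids exactly this obstruction by using the localized index form criterion (Theorem \ref{indexformtheorem} from \cite{prestonfirst}): one only needs a \emph{test} field $v(t)$ along the single trajectory, vanishing at $t_1,t_2$, with $I(v,v)<0$, where $I$ involves only $\Lambda(t,x_0)=D\eta^{\transpose}D\eta$ and $\omega_0(x_0)$ --- no pressure and no genuine Jacobi field. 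Relatedly, your scalar coefficient $p(t)$ built from $P_{rr}(t,x_0)$ cannot drive the argument: the theorem's hypotheses contain no information about the pressure Hessian, so a Sturm comparison on $\ddot f+p f=0$ has nothing to compare against. In the paper the oscillation comes instead from the cross term $\langle\omega_0\times v,\dot v\rangle$: at $r_0=0$ one completes the square, rescales time by $s(t)=\int_0^t d\tau/\alpha_r(\tau)^2$ (which diverges by the vorticity hypothesis since $\omega(t,0,0)=2\mathfrak{b}_0/\alpha_r^2$), and makes the residual integral negative with $f=\sin$; at $r_0=1$ the dominant term is $\int q^2(T-t)^2\dot f^2-\tfrac{\zeta^2}{4}f^2\,dt$ with $q=\alpha_{rt}(T,1,0)$, $\zeta^2=4\mathfrak{b}_1(\mathfrak{b}_1+\mathfrak{b}_2)$, and the hypothesis $\zeta^2>q^2$ is exactly the condition for its Euler-type minimizers $(T-t)^{-1/2}\cos\bigl(\tfrac{\psi}{2}\ln(T-t)+\phi\bigr)$ to oscillate infinitely often. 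Your heuristic gets the role of the threshold qualitatively right but supplies no mechanism that actually produces it.

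The curvature statement also needs a different argument from the one you sketch. The identification ``$K(U,\phi)$ equals a positive multiple of $p(t)$'' is false: \eqref{gausscodazzi} gives $K$ as the pressure-Hessian term \emph{minus} $\int_M\lvert\grad Q\rvert^2\,d\mu$, so the pressure term is only an upper bound, and you cannot conclude $\int_0^T\sup_V K\ge\int_0^T p_+\,dt$; nor does a Gr\"onwall bound on $D\eta(t,x_0)$ convert into a lower bound on the supremum of sectional curvature over all of $T_{\id}\Diffmu(M)$. The paper instead deduces the curvature blowup directly from the conjugate-point sequence by a Hartman-style comparison along the vanishing Jacobi fields: on each interval $[t_n,t_{n+1}]$ one gets $\int_{t_n}^{t_{n+1}}K(s)\,ds\ge 2/(t_{n+1}-t_n)$, and since $\sum_n(t_{n+1}-t_n)\le T<\infty$ the reciprocal sum diverges, giving $\int_0^T\sup_V K(U(t),V)\,dt=\infty$. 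I recommend restructuring your proof around the index form criterion and this comparison step; as written, the two central steps (genuine separated Jacobi fields, and curvature equal to the local coefficient) would fail.
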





\begin{theorem}\label{conjugateblowupno}
Suppose $c_0$ and $b_0$ are both odd, with $\int_0^T \lvert \omega(t,r_0,0)\rvert \, dt=\infty$
for either $r_0=0$ or $r_0=1$. Then we cannot find curve-shortening variations on time intervals $[a,b]\subset [0,T]$ that are
supported in arbitrarily small neighborhoods of the point $r=r_0$, $z=0$. In addition
the pressure Laplacian and the sectional curvature must both approach negative infinity in the
sense that
\begin{equation}\label{infimumlaplacian}
\int_0^T \inf_{x\in M} \Delta P(t,x) \, dt = \int_0^T \inf_{V\in T_{\id}\Diffmu(M)} K(U(t),V) \, dt = -\infty.
\end{equation}
\end{theorem}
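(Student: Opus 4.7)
The plan combines three ingredients: (i) the symmetry reduction at the fixed point $(r_0, 0)$, which makes $D\eta$ block-diagonal there with scalar entries $f(t) = \alpha_r(t, r_0, 0)$ and $g(t) = \gamma_z(t, r_0, 0)$; (ii) the volume-preservation identity $fg \equiv 1$ (at $r_0 = 1$) or $f^2 g \equiv 1$ (at $r_0 = 0$); and (iii) the specialization of the Jacobi equation \eqref{jacobiequation} to the fixed point, which decouples into independent scalar ODEs for $f$ and $g$. In the odd-odd case these yield a sign identity on $\nabla^2 P$ producing negative-curvature blowup, in sharp contrast to the positive-curvature mechanism of Theorem \ref{conjugateblowupyes}.

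Applying Proposition \ref{oddeven}, the parities persist, so $b(t, r, 0) \equiv 0$, and $(r_0, 0)$ is a fixed point with $\alpha_z$, $\gamma_r$, $\beta_r$ vanishing there. The only nonzero vorticity component is $\omega_r = -b_z$, and $\omega\circ\eta = D\eta\,\omega_0$ gives $b_z(t, r_0, 0) = f(t)\, b_{0,z}(r_0, 0)$, so the blowup hypothesis forces $f(t)\to +\infty$ as $t\to T^-$. Differentiating $\eta_{tt} = -(\nabla P)\circ\eta$ radially and vertically, using $P_{rz}(t, r_0, 0) = 0$ (from evenness of $P$ in $z$) together with the vanishing off-diagonals of $D\eta$, yields
\begin{equation}
f''(t) = -P_{rr}(t, r_0, 0)\,f(t), \qquad g''(t) = -P_{zz}(t, r_0, 0)\,g(t).
\end{equation}
Combining with the volume constraint produces the key identity
\begin{equation}
\Delta P(t, r_0, 0) = -c\,\bigl(f'(t)/f(t)\bigr)^2, \qquad c \in \{2, 6\},
\end{equation}
after noting $P_r(t, 1, 0) = 0$ (from the pressure boundary condition in \eqref{pressuregeneral}, since $a|_{r=1} \equiv 0$, $a_z(t, 1, 0) = 0$, and $b(t, 1, 0) = 0$) and $P_r(t, 0, 0) = 0$ (smoothness at the axis), so that $\Delta P$ at the fixed point reduces to $P_{rr} + P_{zz}$ or $2P_{rr} + P_{zz}$.

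Setting $u = (\log f)'$, we have $\int_0^T u\,dt = \lim_{t\to T^-}\log f(t) = +\infty$, whence Cauchy-Schwarz gives $\int_0^T u^2\,dt = +\infty$; therefore $\int_0^T \Delta P(t, r_0, 0)\,dt = -\infty$, establishing $\int_0^T \inf_x \Delta P(t, x)\,dt = -\infty$. For the sectional curvature, Arnold's formula on $\Diffmu(M)$ permits bounding $K(U, V)$ above by $\langle \nabla^2 P\,V, V\rangle/\|V\|^2$ evaluated in the support of a divergence-free $V$, modulo kinetic corrections; localizing $V$ near $(r_0, 0)$ in the smallest-eigenvalue eigendirection of $\nabla^2 P$ yields $\inf_V K(U(t), V) \le \tfrac{1}{3}\,\Delta P(t, r_0, 0)$ on $\{t : \Delta P(t, r_0, 0) < 0\}$, and time integration produces $-\infty$.

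For the non-existence of curve-shortening variations supported in arbitrarily small spatial neighborhoods of $(r_0, 0)$, I would invoke Sturm theory applied to the decoupled Jacobi equations at the fixed point. Since $f, g > 0$ throughout $[0, T)$ by volume preservation, the standard representation $F(t) = f(t)\int_a^t f(s)^{-2}\,ds$ of the second linearly independent solution shows that no non-trivial solution of $F'' = -P_{rr} F$ vanishes at two points of $[0, T)$, and similarly for $g$; hence the Jacobi equation at the fixed point admits no conjugate points. By perturbation this persists for variations supported in a sufficiently small spatial neighborhood, and the Morse-index theorem as adapted to $\Diffmu(M)$ in \cite{ebinmisiolekpreston} and \cite{prestonblowup} then precludes shortening variations there. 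The main technical obstacle I anticipate is exactly this final localization step: confirming that Arnold's formula applied to localized divergence-free $V$ really reduces to the pointwise quadratic form at $(r_0, 0)$ with controllable error, and that the Sturm/Morse argument extends from the exact fixed-point Jacobi equation to Jacobi fields supported in an arbitrarily small but positive-measure neighborhood, so that no spurious two-zero phenomenon is introduced by the perturbation.
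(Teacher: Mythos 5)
Your treatment of the pressure Laplacian and of the sectional curvature follows essentially the paper's route: at the fixed point with $b_0$ odd the constraint identities of Proposition \ref{pointwiselinear} (equivalently, formula \eqref{pressurelapcomps} with $a=a_z=b=0$ there) give $\Delta P(t,r_0,0)=-c\,(f'/f)^2$, and the divergence of $\int_0^T\Delta P(t,r_0,0)\,dt$ follows by a Cauchy--Schwarz/Poincar\'e argument; the curvature statement then comes from the Gauss--Codazzi bound \eqref{gausscodazzi} together with a localized divergence-free test field. Two small repairs are needed: the hypothesis $\int_0^T\lvert\omega(t,r_0,0)\rvert\,dt=\infty$ only gives $\int_0^T f(t)\,dt=\infty$ (hence $\limsup f=\infty$), not $f(t)\to+\infty$ nor $\lim\log f=+\infty$, so you should argue by contradiction ($\int u^2<\infty\Rightarrow\log f$ bounded $\Rightarrow\int f\,dt<\infty$), exactly as the paper does; and your curvature bound should carry the factor $\lVert U\rVert_{L^2}^{-2}$ coming from the denominator of $K(U,V)$, with the localization in the eigendirection justified by an explicit stream-function construction (the paper's symmetric choice $\psi=\zeta(\tfrac{1-r}{\varepsilon})\zeta(\tfrac{z}{\varepsilon})$ captures the full trace $P_{rr}+P_{zz}$ and avoids the eigenvalue step).

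The genuine gap is in the first assertion, the non-existence of curve-shortening variations supported near $(r_0,0)$. Disconjugacy of the two decoupled scalar equations $F''=-P_{rr}F$ and $G''=-P_{zz}G$ (which does follow from $f,g>0$) does not control the relevant quadratic form. The localized index form \eqref{localizedindexform} involves the full matrix $\Lambda(t,1,0)$, which has off-diagonal stretching entries $t\mathfrak{b}_3$, and the vorticity coupling $\langle\omega_0\times v,\dot v\rangle$ with $\omega_0(1,0)=\mathfrak{b}_3 e_r$, which couples the $e_\theta$ and $e_z$ components of $v$; these rotational/coupling terms are precisely what produce conjugate points in the even case of Theorem \ref{conjugateblowupyes} even though the diagonal flow derivatives stay positive there, so they cannot be dismissed by scalar Sturm theory. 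The paper instead computes the localized form explicitly and exhibits it as a sum of squares, $I(v,v)=\int\alpha_r^2\dot f^2+(\dot g+t\mathfrak{b}_3\dot h+\mathfrak{b}_3 h)^2+\gamma_z^2\dot h^2\,dt\ge 0$, an algebraic identity rather than a comparison argument, and this is the step your proposal is missing. Moreover, the appeal to ``the Morse-index theorem as adapted to $\Diffmu(M)$'' in \cite{ebinmisiolekpreston}, \cite{prestonblowup} is not available: those works establish the failure of Fredholmness of the exponential map, which is an obstruction to such index theory, not a source of it. What is actually needed to pass from positivity of the pointwise form to the absence of shortening variations with small support is the lower bound of the group index form by the integral over nearby trajectories of the local index forms (the nonlocal term $\int\lvert\grad Q\rvert^2$ in \eqref{gausscodazzi} has a favorable sign) together with continuity in the base point --- and that argument presupposes the explicit positivity computation you skipped.
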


We thus see that there is a basic dichotomy between positive-curvature blowup scenarios, corresponding
intuitively to convex pressure and dominance of rotation, and negative-curvature scenarios, corresponding
to concave pressure and dominance of stretching.

We now describe the plan of the paper. First in Section \ref{backgroundsection}, we present the axisymmetric
3D Euler equation \eqref{eulergeneral} in Lagrangian form.
From here we compute the equation \eqref{jacobiequation} in components and specialize to the equations on the
symmetry axis and on the boundary, where they simplify drastically. Then we discuss further simplifications that
arise if the data is assumed to have additional reflection symmetries, in order to obtain a system of ODEs, and relate the resulting equations to well-known dynamical systems such as the Ermakov-Pinney equation.  In Section \ref{blowupcriteria} we collect conditions on the pressure Hessian leading to blowup at the fixed points and establish Theorems \ref{alpharswirlaxistheorem}--\ref{forcederivative}. Then in Section \ref{globalgeometry} we prove Theorems \ref{conjugateblowupyes}--\ref{conjugateblowupno}, thus giving a geometric
interpretation of our blowup results both in terms of the local geometry along trajectories and in
terms of the Riemannian geometry of the volume-preserving diffeomorphism group. Lastly, in Section \ref{outlook} we discuss future work by relating the axisymmetric 3D Euler
equation to two lower-dimensional models we believe have similar geometric structure: In one dimension, an equation proposed by Wunsch~\cite{wunsch} as a geometric model of 3D Euler,
based on an idea of Constantin-Lax-Majda~\cite{constantinlaxmajda} and developed by De Gregorio~\cite{degregorio}, and in two dimensions, the surface quasigeostrophic
equation (SQG), which is used as a lower dimensional analogue of the 3D Euler equation.

\section{Background}\label{backgroundsection}

Let us first establish our notation. We denote the velocity field by $U(t,r,z)$, and write it in components as
$U = a e_r + be_{\theta} + c e_z$, where
$e_r$, $e_{\theta}$, and $e_z$ are
the usual cylindrical unit vector fields and the coefficients $a$, $b$, $c$ all depend on $(t,r,z)$.
We denote the Lagrangian flow $\eta\colon [0,T)\times M\to M$ by
\begin{equation}\label{flowcompdef}
\eta(t,r,\theta,z) = \big( \alpha(t,r,z), \theta + \beta(t,r,z), \gamma(t,r,z)\big),
\end{equation}
with initial conditions $\alpha(0,r,z) = r$, $\beta(0,r,z)=0$, and $\gamma(0,r,z)=z$. 
The Lagrangian flow equation \eqref{flowequation} takes the form
\begin{equation}\label{flowcomponents}
\alpha_t = a(t,\alpha,\gamma), \qquad \beta_t = \frac{\alpha}{r} b(t,\alpha,\gamma), \qquad \gamma_t = c(t,\alpha,\gamma).
\end{equation}


The following conservation law for angular vorticity is one of the most important for us.

\begin{proposition}\label{Bsolutionprop}
For any axisymmetric solution $U = ae_r + be_{\theta}+ce_z$ of \eqref{eulergeneral} with flow
components \eqref{flowcomponents}, we have
\begin{equation}\label{Bsolution}
\alpha(t,r,z) b\big(t,\alpha(t,r,z), \gamma(t,r,z)\big) = rb_0(r,z).
\end{equation}
\end{proposition}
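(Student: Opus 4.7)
My plan is to derive a scalar transport equation for the angular momentum $rb$ and then invoke the standard fact that passively transported quantities are constant along Lagrangian trajectories. This is a classical conservation law (conservation of angular momentum) for axisymmetric flows, so I do not expect any serious obstacle.

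First I would project the Euler equation \eqref{eulergeneral} onto the $e_\theta$ direction. In cylindrical coordinates, with $U = ae_r + be_\theta + ce_z$ and $a,b,c$ independent of $\theta$, the covariant derivative $U\cdot\nabla U$ acquires a curvature contribution $(ab/r)e_\theta$ from the non-constancy of the frame (specifically $\nabla_{e_\theta}e_r = \tfrac{1}{r}e_\theta$), yielding
\[
b_t + ab_r + cb_z + \frac{ab}{r} = -\frac{1}{r}P_\theta.
\]
Because $U$ is axisymmetric, the elliptic boundary value problem \eqref{pressuregeneral} has an axisymmetric solution, so $P_\theta \equiv 0$ and the right-hand side vanishes. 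Multiplying through by $r$ and using $rb_r = (rb)_r - b$ to absorb the curvature term gives
\[
(rb)_t + a(rb)_r + c(rb)_z = 0,
\]
which, since $rb$ is independent of $\theta$, is exactly the transport equation $(rb)_t + U\cdot\nabla(rb) = 0$.

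To conclude, I would apply the elementary identity that for any scalar $f$ with $f_t + U\cdot\nabla f = 0$ and any flow $\eta$ of $U$,
\[
\frac{d}{dt}\bigl[f(t,\eta(t,x))\bigr] = f_t(t,\eta) + df(t,\eta)\cdot\eta_t = (f_t + U\cdot\nabla f)\bigl(t,\eta(t,x)\bigr) = 0,
\]
so $f(t,\eta(t,x)) \equiv f(0,x)$. Taking $f(t,r,z) = rb(t,r,z)$ and using \eqref{flowcompdef}--\eqref{flowcomponents}, which identify the radial and vertical components of $\eta$ as $\alpha$ and $\gamma$, yields $\alpha(t,r,z)\, b(t,\alpha(t,r,z),\gamma(t,r,z)) = rb_0(r,z)$, which is \eqref{Bsolution}. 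The only step requiring care is writing $U\cdot\nabla U$ correctly in the cylindrical frame, in particular keeping the $ab/r$ term that makes the conservation work after multiplication by $r$; this is standard.
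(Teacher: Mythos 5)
Your proof is correct and is essentially the paper's argument: both rest on the angular component $b_t + ab_r + cb_z + ab/r = 0$ (with $P_\theta=0$ by axisymmetry) and the observation that this makes the angular momentum $rb$ constant along particle trajectories. The only cosmetic difference is that you form the Eulerian transport equation for $rb$ before composing with the flow, while the paper composes with the flow first and then integrates in time.
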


\begin{proof}
This comes from writing the angular component of \eqref{eulergeneral} as $b_t + ab_r + cb_z + ab/r = 0$.
Composing with the flow and using \eqref{flowcomponents}, we get that $\partial_t(\alpha \beta_t)=0$.
Integrating and using $\alpha|_{t=0}=r$, we get \eqref{Bsolution}.
\end{proof}


%

Incorporating the conservation law of Proposition \ref{Bsolutionprop}, we get convenient forms for
the $\alpha$ and $\gamma$ equations.

\begin{proposition}\label{alphagammaprop}
The components $\alpha$ and $\gamma$ of the flow \eqref{flowcompdef} satisfy
\begin{equation}\label{alphagammaeqs}
\alpha_{tt} - \frac{r^2 b_0^2}{\alpha^3} = -P_r(t,\alpha,\gamma) \quad \text{and}\quad \gamma_{tt} = -P_z(t,\alpha,\gamma).
\end{equation}
while the Jacobian determinant condition \eqref{Jacobian} is
\begin{equation}\label{jacobianexplicit}
\alpha(\alpha_r \gamma_z - \alpha_z \gamma_r) \equiv r.
\end{equation}
\end{proposition}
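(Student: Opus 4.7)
The plan is a direct calculation based on the cylindrical-coordinate form of the Euler equation \eqref{eulergeneral} and the flow equations \eqref{flowcomponents}. I will treat the two equations in \eqref{alphagammaeqs} and the Jacobian identity \eqref{jacobianexplicit} separately.

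First, I would expand \eqref{eulergeneral} component-wise under the axisymmetric ansatz $U = a e_r + b e_\theta + c e_z$ (with $\partial_\theta U = 0$), using the standard fact that $(U\cdot\nabla)(a e_r + b e_\theta + c e_z)$ in cylindrical coordinates produces a centripetal term $-b^2/r$ in the radial component and a Coriolis-type term $ab/r$ in the angular component. This yields the three scalar equations
\begin{align*}
a_t + a a_r + c a_z - b^2/r &= -P_r, \\
b_t + a b_r + c b_z + a b / r &= 0, \\
c_t + a c_r + c c_z &= -P_z.
\end{align*}
Next, differentiating \eqref{flowcomponents} once more in $t$ and applying the chain rule along with $\alpha_t = a(t,\alpha,\gamma)$ and $\gamma_t = c(t,\alpha,\gamma)$, I get $\alpha_{tt} = (a_t + a a_r + c a_z)\big|_{(t,\alpha,\gamma)}$ and $\gamma_{tt} = (c_t + a c_r + c c_z)\big|_{(t,\alpha,\gamma)}$. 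Substituting from the radial and vertical Euler equations gives $\alpha_{tt} = b(t,\alpha,\gamma)^2/\alpha - P_r(t,\alpha,\gamma)$ and $\gamma_{tt} = -P_z(t,\alpha,\gamma)$. To finish the $\alpha$ equation, I would invoke Proposition \ref{Bsolutionprop} to replace $b(t,\alpha,\gamma)$ by $r b_0(r,z)/\alpha$, producing the claimed $r^2 b_0^2/\alpha^3$.

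For \eqref{jacobianexplicit}, I would compute the pullback of the cylindrical volume form $r\, dr\wedge d\theta\wedge dz$ under $\eta(r,\theta,z) = (\alpha,\theta+\beta,\gamma)$ and use volume preservation. Since $\alpha$ and $\gamma$ are independent of $\theta$ and $d(\theta+\beta) = \beta_r\, dr + d\theta + \beta_z\, dz$, the only surviving wedge terms come from the $d\theta$ piece, giving
\[
\eta^*(r\, dr\wedge d\theta\wedge dz)\big|_{\eta} = \alpha\, (\alpha_r \gamma_z - \alpha_z \gamma_r)\, dr\wedge d\theta\wedge dz.
\]
Equating to $r\, dr \wedge d\theta \wedge dz$ yields \eqref{jacobianexplicit}.

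No step here is conceptually hard; the only places requiring care are keeping track of the evaluation point $(t,\alpha,\gamma)$ versus $(t,r,z)$ when applying the chain rule, and remembering the extra factor of $\alpha$ in the cylindrical volume form (which is what turns $\det D\eta = 1$ in Cartesian coordinates into the $\alpha$-weighted identity \eqref{jacobianexplicit}). This bookkeeping is the main potential source of sign or factor errors, but once the two calculations are set up correctly, the proposition follows immediately.
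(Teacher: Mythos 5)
Your proposal is correct and follows essentially the same route as the paper: write the radial and vertical Euler equations in components, differentiate \eqref{flowcomponents} in time with the chain rule, and substitute the conservation law \eqref{Bsolution} to obtain \eqref{alphagammaeqs}, with \eqref{jacobianexplicit} coming from volume preservation (your pullback of $r\,dr\wedge d\theta\wedge dz$ simply fills in the detail the paper calls straightforward). The bookkeeping you flag—evaluating at $(t,\alpha,\gamma)$ and replacing $r$ by $\alpha$ in the centripetal term—is handled correctly.
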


\begin{proof}
The formulas \eqref{alphagammaeqs} are proved by writing the equation \eqref{eulergeneral} in components as
$a_t + aa_r + ca_z - b^2/r = -P_r$ and $c_t + ac_r + cc_z = -P_z$, and using equations \eqref{flowcomponents}
to get second-order equations for $\eta$, then plugging in \eqref{Bsolution}. The formula \eqref{jacobianexplicit}
is straightforward.
\end{proof}



There are effectively two boundaries on the cylinder: at $r=0$ along the axis, and at $r=1$. They are slightly
different since on the axis the constraints are determined by the requirement of smoothness and rotational
invariance, while on the boundary they are determined by the no-flow condition, although they are ultimately
similar since $\alpha$ is fixed in either case: $\alpha(t,0,z) = 0$ and $\alpha(t,1,z)=1$.



\begin{lemma}\label{incompressibilitylemma}
At the fixed radii $r=0$ and $r=1$,
the incompressibility condition \eqref{jacobianexplicit} takes the form
\begin{equation}\label{jacobianfixedpoint}
\alpha_r(t,0,z)^2 \gamma_z(t,0,z) = 1 \qquad \text{and}\qquad \alpha_r(t,1,z) \gamma_z(t,1,z) = 1.
\end{equation}
\end{lemma}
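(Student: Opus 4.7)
The plan is to use the two boundary constraints on $\alpha$ (namely $\alpha(t,0,z)\equiv 0$ on the symmetry axis and $\alpha(t,1,z)\equiv 1$ on the outer cylindrical wall) together with the smoothness/rotational invariance of the flow, and then simply substitute into the Jacobian identity \eqref{jacobianexplicit}.

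First I would handle the easier case $r=1$. Since the no-flow boundary condition $\langle U,e_r\rangle_{r=1}=0$ forces the Lagrangian trajectories starting on $r=1$ to remain on $r=1$, we have $\alpha(t,1,z)\equiv 1$ for all $t,z$. Differentiating this identity in $z$ gives $\alpha_z(t,1,z)=0$. Substituting $r=1$, $\alpha=1$, and $\alpha_z=0$ into \eqref{jacobianexplicit} immediately yields $\alpha_r(t,1,z)\,\gamma_z(t,1,z)=1$, which is the second identity in \eqref{jacobianfixedpoint}.

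The case $r=0$ requires slightly more care because both sides of \eqref{jacobianexplicit} vanish. Rotational invariance and smoothness of the flow at the axis force $\alpha(t,0,z)\equiv 0$; differentiating in $z$ gives $\alpha_z(t,0,z)=0$. Moreover, since $\eta$ is a smooth diffeomorphism that fixes the axis, the radial component $\alpha$ behaves as $\alpha(t,r,z)=\alpha_r(t,0,z)\,r + O(r^3)$ (odd in $r$) while the vertical component satisfies $\gamma(t,r,z)=\gamma(t,0,z)+O(r^2)$ (even in $r$), so $\gamma_r(t,0,z)=0$ and $\alpha(t,r,z)/r\to \alpha_r(t,0,z)$ as $r\to 0$. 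Dividing \eqref{jacobianexplicit} through by $r$ and letting $r\to 0$, the term $(\alpha/r)\,\alpha_z\,\gamma_r$ vanishes (both factors tend to zero) while $(\alpha/r)\,\alpha_r\,\gamma_z \to \alpha_r(t,0,z)^2\,\gamma_z(t,0,z)$, yielding the first identity in \eqref{jacobianfixedpoint}.

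The only genuinely subtle point is the axis behavior: one must justify that $\alpha$ is odd and $\gamma$ is even as smooth functions of the Cartesian coordinates restricted to the axisymmetric class. This follows from the fact that axisymmetric velocity fields of class $H^s$ with $s>5/2$ correspond to smooth Cartesian vector fields on $M$ invariant under rotation, so the flow components in cylindrical coordinates have the standard parity under $r\mapsto -r$. Given this, the rest of the argument is a direct substitution and limit computation, presenting no further obstacle.
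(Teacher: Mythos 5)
Your proof is correct and follows essentially the same route as the paper: use $\alpha(t,r_0,z)\equiv r_0$ (hence $\alpha_z=0$ there) in the Jacobian identity \eqref{jacobianexplicit}, with the factor $\alpha/r$ becoming $\alpha_r(t,0,z)$ at the axis and $1$ at the wall. The extra care you take with the $r\to 0$ limit and the parity of $\alpha,\gamma$ in $r$ simply makes explicit what the paper's one-line proof leaves implicit (and in fact only $\alpha(t,0,z)=0$ and $\alpha_z(t,0,z)=0$ are needed, not $\gamma_r(t,0,z)=0$).
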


\begin{proof}
This follows from the fact that $\alpha(t,r_0,z)=\epsilon$ for $r_0 = 0$ or $r_0=1$, so
that $\alpha_z(t,\epsilon,z)=0$ and \eqref{jacobianexplicit} becomes $(\alpha/r)\alpha_r \gamma_z=1$.
The term $\alpha/r$ becomes $\alpha_r$ at $r_0=0$ and becomes unity at $r_0=1$.
\end{proof}


We obtain fixed points for the flow (for any $\theta$) at $z=0$ if we assume that the flow is symmetric
about the plane $z=0$, in the sense that $c_0$ is odd in $z$: that is, $c_0(r,-z) = -c_0(r,z)$. The following
result is well-known in the case when $b_0$ is odd, but the case $b_0$ even has not been as widely studied.

\begin{proposition}\label{oddeven}
Let $U_0$ be a divergence-free velocity field with components $U_0 = a_0 e_r + b_0 e_{\theta} + c_0 e_z$ on the solid
torus $M = D\times S^1$, and assume that $a_0$ is even, $c_0$ is odd, and $b_0$ is either even or odd in the $z$ variable
(viewed as an element of $[-\pi,\pi]$). Then the same symmetries hold for $U$ for all time. In particular the origin
$r=0$ and $z=0$ is a fixed point of the flow, as are all points on the circle $z=0$ and $r=1$.
\end{proposition}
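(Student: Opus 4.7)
The plan is the classical symmetry-plus-uniqueness argument: find an isometry of $M$ under whose pushforward the initial data $U_0$ is preserved, then conclude via uniqueness of solutions to \eqref{eulergeneral} that the same pushforward preserves $U(t)$ at every time, which is exactly the claimed parity statement.

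When $b_0$ is even in $z$, the appropriate isometry is the reflection $\sigma\colon (r,\theta,z)\mapsto (r,\theta,-z)$. Its differential fixes $e_r$ and $e_\theta$ while flipping $e_z$, so for any axisymmetric field $U = ae_r+be_\theta+ce_z$ the pushforward is $(\sigma_*U)(t,r,z)=a(t,r,-z)\,e_r+b(t,r,-z)\,e_\theta-c(t,r,-z)\,e_z$. Because $\sigma$ is an isometry preserving the boundary $r=1$ and the axisymmetry condition, it intertwines the divergence operator, the normal-trace boundary condition, and the Poisson problem \eqref{pressuregeneral} (with pressure transforming as $P\mapsto P\circ\sigma$); hence $\sigma_*U(t)$ is again a solution of \eqref{eulergeneral}. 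The parity hypotheses on $a_0,b_0,c_0$ give $\sigma_*U_0=U_0$, and the uniqueness theorem of Ebin--Marsden \cite{ebinmarsden} then forces $\sigma_*U(t)=U(t)$ for all $t\in[0,T)$, which is precisely the persistence of the parities of $a$, $b$, and $c$.

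The case $b_0$ odd is handled identically after replacing $\sigma$ with the orientation-preserving isometry $\tau\colon(r,\theta,z)\mapsto(r,-\theta,-z)$, whose differential flips both $e_\theta$ and $e_z$. Axisymmetry keeps $\tau_*U$ a function of $(r,z)$ alone, and the extra sign on the angular component converts oddness of $b_0$ into the identity $\tau_*U_0=U_0$; the uniqueness argument is otherwise unchanged. For the fixed-point assertions: oddness of $c$ in $z$ immediately yields $c(t,r,0)\equiv 0$, so the plane $z=0$ is flow-invariant. Smoothness combined with axisymmetry forces $a(t,0,z)=0$ on the axis, and the no-flow condition forces $a(t,1,z)=0$ on the boundary $r=1$; hence the origin is a fixed point and each point of the circle $\{r=1,z=0\}$ stays on that circle (rotating in $\theta$ under $b$ but stationary in $(r,z)$).

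The main obstacle is the verification hidden in the second paragraph: one must confirm explicitly that $\sigma$ and $\tau$ intertwine the full boundary value problem \eqref{eulergeneral}--\eqref{pressuregeneral}, including the nonlocal pressure and its inhomogeneous Neumann condition at $r=1$. This is routine since $\sigma$ and $\tau$ are genuine isometries of $M$, but it is the only place where the specific structure of the axisymmetric Euler system (rather than abstract equivariance) actually enters the proof.
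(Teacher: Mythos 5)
Your argument is correct, but it runs along a genuinely different track than the paper's. You prove parity persistence abstractly: conjugate the solution by the isometry $\sigma\colon(r,\theta,z)\mapsto(r,\theta,-z)$ (or $\tau\colon(r,\theta,z)\mapsto(r,-\theta,-z)$ when $b_0$ is odd), observe that an isometry of $M$ preserving $\partial M$ intertwines the whole system \eqref{eulergeneral}--\eqref{pressuregeneral}, and invoke uniqueness of $H^s$ solutions from Ebin--Marsden to conclude $\sigma_*U(t)=U(t)$. The paper instead works entirely in Lagrangian component form: it uses the swirl conservation law of Proposition \ref{Bsolutionprop} to see that $b^2$ remains even, writes the pressure Poisson equation in components \eqref{pressurelapcomps} to see that the source, hence $P$, is even in $z$, and then checks that the component equations \eqref{alphagammaeqs} preserve the symmetry class, deducing the fixed points from $\gamma(t,r,0)=0$ together with $\alpha(t,0,z)=0$ and $\alpha(t,1,z)=1$. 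Your route is arguably cleaner and more rigorous as a proof of parity persistence, since it replaces the paper's ``easy to check'' invariance of the component system by a single application of well-posedness, and it does not even need the conservation law; what the paper's route buys is exactly the explicit component information ($P$ even in $z$, the behavior of $\alpha b$, $\gamma(t,r,0)=0$) that is reused throughout the later sections. One small point you leave implicit: for the origin to be a genuine fixed point you also need the angular component to vanish there, i.e.\ $b(t,0,0)=0$, which follows from smoothness of an axisymmetric field on the axis (or, in the paper's framework, from $\alpha(t,0,z)=0$ in \eqref{Bsolution}); your remark handles $a$ on the axis but not $b$, so add that one clause.
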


\begin{proof}
Proposition \ref{Bsolutionprop} implies that if $b_0^2$ is even, then so is $b^2$ for all time.
It is then easy to check that the equations \eqref{alphagammaeqs} are preserved under the assumed symmetries, since
writing out equation \eqref{pressuregeneral} in components as
\begin{equation}\label{pressurelapcomps}
\tfrac{1}{r} \partial_r (r P_r) + P_{zz} = -a^2/r^2 - a_r^2 - 2a_z c_r - c_z^2 + 2bb_r/r
\end{equation}
shows that $P$ will be even in $z$. We then have $c(t,r,0)=0$ for all $t$ and $r$, so that by \eqref{flowcomponents}
we have $\gamma(t,r,0) = 0$ for all $t$ and $r$. Since we also have $\alpha(t,0,z)=0$ and $\alpha(t,1,z)=1$,
we conclude that $z=0$ and $r=0$ or $r=1$ are all fixed points.
\end{proof}

Note that the origin is a genuine fixed point, while the circle $r=1$ and $z=0$ is fixed by the flow but may rotate
since $b$ is not necessarily zero there (if it is not odd).

Now differentiating the equations \eqref{alphagammaeqs} spatially, we obtain linear ODEs
for the components of the matrix $D\eta$ at these fixed points.

\begin{proposition}\label{pointwiselinear}
Suppose $U_0$ is a vector field satisfying the assumptions of Proposition \ref{oddeven}.

Then on the axis $f_0(t)=\alpha_r(t,0,0)$ and $g_0(t) = \gamma_z(t,0,0)$ satisfy the ODEs
\begin{equation}\label{axisBeven}
f_0''(t) - \frac{\mathfrak{b}_0^2}{f_0(t)^3} = -P_{rr}(t,0,0) f_0(t) \qquad \text{and}\qquad g_0''(t) = -P_{zz}(t,0,0) g_0(t)
\end{equation}
with the constraints $f_0(t)^2 g_0(t) = 1$ and
\begin{equation}
\label{pressureaxis}
2P_{rr}(t,0,0) + P_{zz}(t,0,0) = -6f_0'(t)^2/f(t)^2 + 2\mathfrak{b}_0^2/f(t)^4,
\end{equation}
where $\mathfrak{b}_0 = \partial_rb_0(0,0)$. And on the boundary, $f_1(t) = \alpha_r(t,1,0)$ and $g_1(t) = \gamma_z(t,1,0)$ satisfy the ODEs
\begin{equation}\label{boundaryBeven}
f_1''(t) - 2\mathfrak{b}_1 (\mathfrak{b}_1 + \mathfrak{b}_2) + 3\mathfrak{b}_1^2 f_1(t) = -P_{rr}(t,1,0) f_1(t)\,\quad \text{and}\,\quad
g_1''(t) = -P_{zz}(t,1,0) g_1(t),
\end{equation}
with the constraints $f_1(t)g_1(t)=1$ and
$$P_{rr}(t,1,0) + P_{zz}(t,1,0) = -3\mathfrak{b}_1^2 + \frac{2\mathfrak{b}_1(\mathfrak{b}_1+\mathfrak{b}_2)}{f_1(t)} - 2f_1'(t)^2/f_1(t)^2,$$
where $\mathfrak{b}_1 = b_0(1,0)$ and $\mathfrak{b}_2 = (b_0)_r(1,0)$.
\end{proposition}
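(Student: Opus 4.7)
The plan is to differentiate the ODEs \eqref{alphagammaeqs} of Proposition \ref{alphagammaprop} in the spatial directions at the fixed points, exploiting heavily the symmetries coming from Proposition \ref{oddeven} and the axisymmetric smoothness conditions, and then to use the Laplacian formula \eqref{pressurelapcomps} to obtain the algebraic relation between $P_{rr}$ and $P_{zz}$.

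To produce the axis equations, I would first use axisymmetric smoothness to write $\alpha(t,r,z) = r\tilde\alpha(t,r,z)$ and $b_0(r,z) = r\tilde b_0(r,z)$, so that the apparently singular quantity $r^2 b_0^2/\alpha^3$ becomes $\tilde b_0^2\, r/\tilde\alpha^3$, which is smooth and vanishes at $r=0$. Differentiating the $\alpha$-equation in $r$ and evaluating at $(r,z)=(0,0)$ then gives $\partial_r(\tilde b_0^2 r/\tilde\alpha^3) = \tilde b_0(0,0)^2/\tilde\alpha(t,0,0)^3 = \mathfrak{b}_0^2/f_0^3$, while the right-hand side produces $-P_{rr}(t,0,0) f_0$ once I observe that $P_{rz}(t,0,z)\equiv 0$ (since $P$ is axisymmetric, $P_r$ is odd in $r$) and that $\gamma_r(t,0,0)=0$ (since $\gamma(t,r,0)\equiv 0$ by Proposition \ref{oddeven}). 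Differentiating the $\gamma$-equation in $z$ at the origin is cleaner: the cross terms vanish because $\alpha_z(t,0,z)\equiv 0$, leaving $g_0''=-P_{zz}(t,0,0)g_0$. The Jacobian constraint $f_0^2 g_0\equiv 1$ is just Lemma \ref{incompressibilitylemma} at $z=0$.

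For the boundary $r=1$ the strategy is the same, but now $\alpha(t,1,z)\equiv 1$ forces $\alpha_{tt}(t,1,0)=0$, which by the $\alpha$-equation immediately gives the useful identity $P_r(t,1,0)=\mathfrak{b}_1^2$. Differentiating the $\alpha$-equation in $r$ at $(1,0)$ and applying the product rule to $r^2 b_0^2/\alpha^3$ gives the three contributions $2\mathfrak{b}_1^2 + 2\mathfrak{b}_1\mathfrak{b}_2 - 3\mathfrak{b}_1^2 f_1$, producing the stated ODE for $f_1$; and differentiating the $\gamma$-equation in $z$ at $(1,0)$ gives the ODE for $g_1$, again using $\alpha_z(t,1,0)=0$ and $\gamma_r(t,1,0)=0$. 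The Jacobian constraint $f_1 g_1 \equiv 1$ again follows from Lemma \ref{incompressibilitylemma}.

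For the algebraic constraints on the pressure Hessians I would evaluate \eqref{pressurelapcomps} at the fixed point. This requires expressing $a_r, c_z, b, b_r$ at the fixed point in terms of $f,g$ and their derivatives. Differentiating the flow equations \eqref{flowcomponents} in $r$ and $z$ and using $\alpha_z=0$, $\gamma_r=0$ at the fixed radii yields $a_r(t,0,0)=f_0'/f_0$, $c_z(t,0,0)=g_0'/g_0$ and similarly on $r=1$; while differentiating the conservation law \eqref{Bsolution} in $r$ at the axis gives $b_r(t,0,0)=\mathfrak{b}_0/f_0^2$, and at the boundary gives $b(t,1,0)=\mathfrak{b}_1$ together with $b_r(t,1,0)=(\mathfrak{b}_1+\mathfrak{b}_2)/f_1-\mathfrak{b}_1$. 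On the axis one then reads off $\tfrac1r\partial_r(rP_r)\to 2P_{rr}$ as $r\to 0$, and plugging into \eqref{pressurelapcomps} with the symmetries $a_z(t,r,0)=c_r(t,r,0)=0$ and applying the Jacobian constraint to rewrite $(g_0'/g_0)^2 = 4(f_0'/f_0)^2$ delivers \eqref{pressureaxis}. On the boundary, $\tfrac1r\partial_r(rP_r)=P_{rr}+P_r=P_{rr}+\mathfrak{b}_1^2$, and substituting the computed values of $a_r$, $c_z$, $b$, $b_r$ into the right-hand side of \eqref{pressurelapcomps} yields the claimed relation after simplification.

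The main obstacle is bookkeeping rather than any deep step: correctly tracking which terms vanish at the fixed points because of axisymmetric smoothness (forcing $a$, $b$, $\alpha$, $P_r$ to be divisible by $r$ at $r=0$) versus because of the assumed reflection symmetries (forcing $\gamma$, $c$, $a_z$, $c_r$ to vanish at $z=0$), and handling the boundary contribution $P_r(t,1,0)=\mathfrak{b}_1^2$ correctly in the Laplacian formula. Once these are sorted out, each identity is a direct computation.
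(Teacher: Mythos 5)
Your proposal is correct and takes essentially the approach the paper intends (its only indicated proof is the remark that one differentiates \eqref{alphagammaeqs} spatially at the fixed points, combined with \eqref{pressurelapcomps}, \eqref{Bsolution}, and Lemma \ref{incompressibilitylemma}); your handling of the singular terms at $r=0$, the vanishing cross terms $P_{rz}\gamma_r$ and $P_{rz}\alpha_z$, the boundary identity $P_r(t,1,0)=\mathfrak{b}_1^2$, and the substitutions $a_r=f'/f$, $c_z=g'/g$, $b_r(t,0,0)=\mathfrak{b}_0/f_0^2$, $b_r(t,1,0)=(\mathfrak{b}_1+\mathfrak{b}_2)/f_1-\mathfrak{b}_1$ all check out and reproduce both pressure constraints. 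One small fix: at the axis a single $r$-derivative of \eqref{Bsolution} evaluated at $(0,0)$ gives only $0=0$, so to extract $b_r(t,0,0)=\mathfrak{b}_0/f_0(t)^2$ you should either differentiate twice or use the factored form $\alpha^2\tilde b(t,\alpha,\gamma)=r^2\tilde b_0(r,z)$, exactly as you already do for the centrifugal term.
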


The equations \eqref{axisBeven} and \eqref{boundaryBeven} are essentially like the Jacobi equations in
a three-dimensional manifold along a geodesic, where the Hessian of the pressure acts as the
effective sectional curvature; this analogy is made more precise in \cite{prestonfirst, prestonwkb}.
The equation \eqref{axisBeven} is essentially the  Ermakov-Pinney equation \cite{ermakovpinneyreview}
which describes a planar harmonic oscillator with central returning force. We have blowup if any of
the terms $f_0(t)$ or $g_0(t)$ become zero in finite time (since the constraints mean that if one is zero, the
other approaches infinity). Our main technique will thus be the comparison theory for linear ODEs.

\section{Blowup criteria}\label{blowupcriteria}

Here we collect some conditions on the pressure Hessian which would ensure blowup at the fixed points of the equations.
They are all based on the fact that some component of $D\eta$ must approach zero in order for the other component to
approach infinity, and it is of course easier to ensure that the solution of a linear differential equation approaches
zero than infinity since we need not require the coefficients to approach infinity. Geometrically we are asking that the
Riemannian curvature is positive enough that we get conjugate points, since $D\eta$ generates the Jacobi fields and we
want to see them vanishing.

\subsection{Blowup criteria on the axis}\label{blowupaxis}

The following result generalizes the case $\epsilon=0$ with $P_{rr}(t,0,0)\ge 0$ considered by Chae~\cite{chae1}; here
we consider blowup in terms of a linear ODE for the component of $D\eta$ rather than a Riccati equation for the velocity
component.

\begin{theorem}\label{gammazaxistheorem}
Suppose $c_0$ is odd, $a_0$ is even, and $b_0$ is odd or even in $z$. Then any of the following is
sufficient to ensure blowup in finite time, for either $r_0=0$ or $r_0=1$.
\begin{itemize}
\item $P_{rr}(t,r_0,0) \ge 0$ for all $t$, and initially $a_r(0,r_0,0)<0$ and $b_0(0,r_0,0)=0$;
\item $P_{zz}(t,r_0,0) \ge 0$ for all $t$, and initially $c_z(0,r_0,0)<0$.
\end{itemize}
\end{theorem}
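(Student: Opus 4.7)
The plan is to reduce both cases to standard comparison arguments for the linear ODEs of Proposition \ref{pointwiselinear}. The Jacobian constraints $f_0(t)^2 g_0(t) \equiv 1$ (on the axis) and $f_1(t) g_1(t) \equiv 1$ (on the boundary) mean that $\|D\eta\|\to\infty$ at $(r_0,0)$ precisely when one of $f,g$ reaches zero in finite time, so it suffices to produce a finite-time zero of the relevant component. Using $\eta(0)=\id$ and differentiating the flow equations \eqref{flowcomponents} in the appropriate spatial direction at $t=0$, one reads off $f(0)=g(0)=1$, $f'(0) = a_r(0,r_0,0)$, and $g'(0) = c_z(0,r_0,0)$.

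For the second hypothesis ($P_{zz}(t,r_0,0)\ge 0$ and $c_z(0,r_0,0)<0$) the argument is the same at either $r_0=0$ or $r_0=1$, since in both cases the $g$-equation is $g'' = -P_{zz}(t,r_0,0)\,g$. This forces $g''\le 0$ while $g>0$, hence $g'(t)\le g'(0)<0$ and so $g(t)\le 1 + g'(0)\,t$, which must reach zero by time $T_* \le -1/g'(0) < \infty$. For the first hypothesis I would use the swirl-vanishing assumption to eliminate the ``centrifugal'' $\mathfrak{b}^2/f^3$ term from the $f$-equation. At $r_0=1$ the hypothesis $b_0(1,0)=0$ gives $\mathfrak{b}_1 = 0$, so \eqref{boundaryBeven} collapses to $f_1'' = -P_{rr}(t,1,0)\,f_1$; at $r_0=0$, in the odd case $b_0(r,0)\equiv 0$ automatically gives $\mathfrak{b}_0 = \partial_r b_0(0,0) = 0$, and in the even case the hypothesis must be read as giving the same conclusion, so \eqref{axisBeven} reduces to $f_0'' = -P_{rr}(t,0,0)\,f_0$. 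The same concavity-and-comparison argument, now with $P_{rr}\ge 0$ and $f'(0) = a_r(0,r_0,0)<0$, drives $f$ to zero by time $T_* \le -1/f'(0)$, forcing the companion factor to blow up.

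The step that needs the most care is the handling of the centrifugal term when $b_0$ is even in $z$ and $r_0 = 0$: the nominal condition $b_0(0,0)=0$ is automatic from smoothness of a cylindrical vector field and by itself does \emph{not} force $\mathfrak{b}_0 = 0$. Without $\mathfrak{b}_0=0$ the positive barrier $\mathfrak{b}_0^2/f_0^3$ can prevent $f_0$ from reaching $0$ under the sole assumption $P_{rr}\ge 0$: indeed, taking $P_{rr}\equiv 0$ and integrating yields the conserved energy $f_0'(t)^2 + \mathfrak{b}_0^2/f_0(t)^2 = a_r(0,0,0)^2 + \mathfrak{b}_0^2$, which bounds $f_0$ away from zero. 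So the first bullet at $r_0 = 0$ really requires $\mathfrak{b}_0 = 0$, and that is how the hypothesis must be parsed; once this is in hand, the remainder of the argument is the elementary linear ODE comparison described above.
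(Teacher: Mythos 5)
Your proposal is correct and follows essentially the same route as the paper: reduce to the linear equations of Proposition \ref{pointwiselinear}, note $g(0)=1$, $g'(0)<0$, $g''/g\le 0$, and compare with $1+g'(0)t$ to force a zero (hence blowup of the companion factor via the Jacobian constraint) by time $-1/g'(0)$. Your caveat about the even-swirl case on the axis --- that the hypothesis must be read as killing $\mathfrak{b}_0=\partial_r b_0(0,0)$, since otherwise the centrifugal term blocks the comparison --- is a legitimate refinement consistent with the paper's own remark after the theorem that $\alpha_r$ cannot vanish in finite time when $(b_0)_r(0,0)\neq 0$ and $P_{rr}(t,0,0)$ stays bounded.
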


\begin{proof}
All four cases are the same: if $g(t) = \alpha_r(t,r_0, 0)$ or $g(t) = \gamma_z(t,r_0,0)$,
the assumptions are equivalent to $g(0)=1$, $g'(0)<0$, and $g''(t)/g(t)\le 0$.
Hence comparing with $g_0(t) = 1 + g'(0)t$ we have $g(T)=0$ no later than $T = -1/g'(0)$.
\end{proof}

Note that while Chae's result~\cite{chae1} only
requires $P_{rr}$ to be positive along the axis (not necessarily unbounded), equation \eqref{pressureaxis}
clearly requires $P_{zz}$ to blow up to negative infinity in this case, corresponding to the fact that boundedness of the
pressure Hessian is also sufficient to prevent blowup~\cite{chae2007}.

In the case where $(b_0)_r(0,0)\ne 0$, the situation becomes more
interesting. It is impossible in this case for $\alpha_r$ to approach zero in finite time if
$P_{rr}(t,0,0)$ is bounded, and in fact using fairly standard Sturm-Liouville comparison theory~\cite{swanson}, we can get a blowup rate for $P_{rr}(t,0,0)$ if $\alpha_r$ does approach zero and satisfies a localized BKM-type criterion in the form
$$\int_0^T \lvert \omega(t,0,0)\rvert \, dt = 2 \lvert b_0\rvert \int_0^T \frac{dt}{\alpha_r(t,0,0)^2} =  \infty;$$
see Theorem \ref{conjugatepoints}.


\noindent
\textbf{Proof of Theorem \ref{alpharswirlaxistheorem}.}
Consider the equation $\ddot{\rho}(t) - b_0^2/\rho(t)^3 = -F(t) \rho(t)$, where $\rho(t) = \alpha_r(t,0,0)$ and $F(t) = P_{rr}(t,0,0)$.
As pointed out by Eliezer and Gray~\cite{eliezergray}, this is the
equation for the radial coordinate $\rho(t)$ for a planar central force system $\ddot{x}(t) = -F(t)x(t)$, $\ddot{y}(t) = -F(t)y(t)$,
where $\rho(t)^2 = x(t)^2 + y(t)^2$ and $x(0)\dot{y}(0) - \dot{x}(0)y(0) = b_0$. The angular coordinate $\theta(t)$ then satisfies
$\dot{\theta}(t) = b_0/\rho(t)^2$, and the condition $\int_0^T dt/\rho(t)^2 = \infty$ means exactly that $\theta(t)$ winds around the
origin infinitely many times as $t\to T$. Hence $x(t)$ and $y(t)$ have infinitely many zeroes as $t\to T$, and thus \emph{every}
solution of $\ddot{g}(t) = -F(t)g(t)$ has infinitely many zeroes on $(0,T)$.

We now change variables so that the blowup time is sent to infinity: set $s = -\ln{(T-t)}$, set $g(t) = (T-t)^{1/2} j(s)$,
and $F(t) = H(s)/(T-t)^2$. Then the equation $\ddot{g}(t) = -F(t)g(t)$ becomes
$$ j''(s) = \left(\tfrac{1}{4} - H(s)\right) j(s),$$
and we must have $\limsup_{s\to \infty} H(s)-\frac{1}{4} \ge 0$ for solutions of this equation to have
infinitely many zeroes. Translating back in terms of $P_{rr}$ we obtain
\begin{equation}
\label{ermakovpressureblowup}
\limsup_{t\to T} (T-t)^2 P_{rr}(t,\mathbf{0}) \ge \tfrac{1}{4}.
\end{equation}
$\square$

With more assumptions one can obtain more precise criteria, using the methods presented e.g., in Swanson~\cite{swanson},
but for our purposes \eqref{ermakovpressureblowup} already makes clear how tightly the pressure Hessian is constrained
in a typical blowup scenario at a fixed point with nonzero vorticity.

Theorem \ref{blowupiseasy} shows that we can get the same sort of blowup as in
Theorem \ref{gammazaxistheorem} even if the pressure Hessian is negative, as long
as there is enough rotation to compensate.

\noindent
\textbf{Proof of Theorem \ref{blowupiseasy}.}
First we work under the condition $-k^2 \le P_{rr}(t,1,0) + 3\mathfrak{b}_1^2 \le 0$.
Write $f_1(t) = \alpha_r(t,1,0)$ and $F(t) = P_{rr}(t,1,0)+3\mathfrak{b}_1^2$; then by equation \eqref{boundaryBeven}, $f_1$ satisfies the equation
\begin{equation}\label{gequation}
f_1''(t) = -c^2 - F(t) f_1(t), \qquad f_1(0)=1,\,  f_1'(0)=-\mathfrak{a},
\end{equation}
where $0\le -F(t)\le k^2$ and $\mathfrak{a}>0$. Consider the solution $y_1(t)$ of the related problem
$$ y_1''(t) = -F(t) y_1(t), \qquad y_1(0)=1,\, y_1'(0)=0.$$
By the usual Sturm comparison theorem~\cite{swanson}, we have
\begin{equation}\label{y1inequality}
1\le y_1(t) \le \cosh{kt}\quad \text{ for all $t$.}
\end{equation}
Using the reduction of order trick, the solution of $y_2''(t) = -F(t)y_2(t)$ with $y_2(0)=0$ and $y_2'(0)=1$
is given by $y_2(t) = y_1(t) \int_0^t ds/y_1(s)^2$. Using variation of parameters, we may write the solution
$f_1(t)$ of \eqref{gequation} as
\begin{align*}
f_1(t) &= y_1(t) - \mathfrak{a} y_2(t) + c^2y_1(t) \int_0^t y_2(s)\,ds - c^2 y_2(t) \int_0^t y_1(s)\,ds \\
&= y_1(t)\left[ 1 - \mathfrak{a}\int_0^t \frac{ds}{y_1(s)^2} - c^2 \int_0^t \int_s^t \frac{y_1(s)}{y_1(\tau)^2} \, d\tau \, ds\right].
\end{align*}
The inequality \eqref{y1inequality} now implies that
\begin{align*}
\frac{f_1(t)}{y_1(t)} &\le 1 - \mathfrak{a}\int_0^t \frac{ds}{\cosh^2{ks}} - c^2 \int_0^t \int_s^t \frac{d\tau}{\cosh^2{k\tau}} \, ds \\
&= 1 - \frac{\mathfrak{a}}{k} \tanh{kt} - \frac{c^2\ln{2}}{k^2} + \frac{c^2}{k^2} \ln{\big(1+e^{-2kt}\big)} + \frac{2c^2t}{k(1+e^{2kt})},
\end{align*}
and thus we have
$$ \lim_{t\to\infty} \frac{f_1(t)}{y_1(t)} \le \frac{k^2 - \mathfrak{a}k - c^2 \ln{2}}{k^2}.$$
We conclude that $f_1(t)$ is eventually negative.

The second assumption that $P_{rr}(t,1,0) + 3\mathfrak{b}_1^2 \ge 0$ is much easier: in this case we just
have $\alpha_{rtt}(t,1,0) \le -c^2$, and since $\alpha_r(0,1,0)=1$ and $\alpha_{tr}(0,1,0)<0$, we obviously have $\alpha_r(t,1,0)$ reaching zero in finite time.\hfill$\square$


In the previous theorems, we showed how some assumptions on the sign of certain components
of the pressure Hessian could cause blowup on
either the axis or on the boundary. We now change our perspective somewhat and look
at the consequences of assumptions on the time derivative of components of the pressure
Hessian.


\noindent
\textbf{Proof of Theorem \ref{forcederivative}.}
From Proposition \ref{pointwiselinear} we see that the simplest case is the $\gamma_z$ component, since in all cases ($b$ even or odd, or whether we work on the boundary
or the axis), $\gamma_z$ satisfies the
simplest equation $\gamma_{ttz} = -P_{zz}(t) \gamma_z$. Hence we need only consider the equation
\begin{equation}\label{basicgequation}
g''(t) = -Q(t) g(t), \qquad g(0)=1, \quad g'(0)=-\mathfrak{a},
\end{equation}
where $\mathfrak{a}>0$, $Q(0)<0$, and $\nu^2 = \mathfrak{a}^2 + Q(0)>0$, and we assume $Q'(t)\ge 0$ for all $t$.

Multiplying \eqref{basicgequation} by $g'$ and integrating, we obtain
$$ g'(t)^2 + Q(t)g(t)^2 = \nu^2 + \int_0^t Q'(\tau) g(\tau)^2 \, d\tau.$$
Rearranging this now gives
$$ g'(t)^2 - \nu^2 = -Q(0) g(t)^2 + \int_0^t Q'(\tau) \big[g(\tau)^2-g(t)^2\big] \, d\tau.$$
Since $g(0)=1$ and $g'(0)<0$, we know $g$ is decreasing and positive on some time interval $[0,T_0]$. On this interval we have $g(\tau)^2-g(t)^2\ge 0$ whenever $\tau\le t\le T_0$. We conclude that on this interval, $g'(t)^2 - \nu^2 \ge 0$, which implies that $g'(t)\le -\nu$ as long as $g(t)$ is decreasing and positive. Since $g'$ is continuous, we conclude that $g$ must reach zero before it changes direction, and furthermore we have $g(t) \le 1-\nu t$ so that the time $T$ of the first zero is no larger than $1/\nu$.\hfill $\square$

Of course the same theorem applies with $\alpha_r$ replacing $\gamma_z$, in case $b_0$ is assumed to be odd, since then
the equations \eqref{axisBeven}--\eqref{boundaryBeven} all reduce to the same equation $f''(t) = -Q(t) f(t)$.
Similar theorems could be proved using the more complicated equations for $\alpha_r$ arising from \eqref{axisBeven} and \eqref{boundaryBeven} in case $b_0$ is even, but we will leave these aside for now.

\section{Global geometry of the Euler equation}\label{globalgeometry}

In the previous Section we made a variety of assumptions on the local behavior of the fluid which could lead to blowup; here we would like to tie this local picture into the global behavior of the equation (especially as related to the Riemannian geometry of the volume-preserving diffeomorphism group) and the global behavior of the pressure function.

\subsection{Conjugate points and blowup in axisymmetric fluids}

Viewed as a Riemannian manifold, the group of volumorphisms $\Diffmu(M^3)$ with Riemannian metric
$\llangle u,u\rrangle = \int_M \langle u,u\rangle \, d\mu$ has geodesics satisfying
\eqref{lagrangianfluid} (which is equivalent to \eqref{flowequation} and \eqref{eulergeneral}), as pointed out by Arnold~\cite{arnold}. Its sectional curvature describes small Lagrangian perturbations, and is given for divergence-free velocity fields $U$ and $V$, using the Gauss-Codazzi formula~\cite{misiolek}, by
\begin{equation}\label{gausscodazzi}
\llangle R(U,V)V,U\rrangle_{L^2} = \int_M \nabla^2 P(V,V)\, d\mu - \int_M \lvert \grad Q\rvert^2 \, d\mu,
\end{equation}
where $P$ is the pressure of the velocity field $U$ and $Q$ solves the Neumann problem
$\Laplacian Q = -\diver{(U\cdot \nabla V)}$ with $\langle \grad Q+U\cdot\nabla V, e_r\rangle_{r=1} = 0$.
In the sense that the Hessian of the pressure always plays the role of the ``effective'' curvature along local Jacobi fields,
as mentioned after Proposition \ref{pointwiselinear}, the formula \eqref{gausscodazzi} shows that the actual Riemannian
curvature is always less than this ``effective'' curvature term.

The following theorem shows that conjugate points on the volumorphism group can be found by a local criterion along a single particle
path, which demonstrates that it is easy to shorten a geodesic curve by performing rotational perturbations near a point. This is a
purely three-dimensional result: Fredholmness of the 2D Riemannian exponential map~\cite{ebinmisiolekpreston} implies there is no similar result in two dimensions.

\begin{theorem}\label{indexformtheorem}\cite{prestonfirst}
Suppose $M$ is a three-dimensional manifold and $x$ is in the interior of $M$, and $\eta$ is a Lagrangian solution of the ideal
Euler equations with $\eta_t(0,x) = u_0(x)$. Let $\Lambda(t,x) = D\eta(t,x)^{\transpose} D\eta(t,x)$, and let $\omega_0(x)=\curl U_0(x)$ denote the initial
vorticity at $x$.
If there is a vector field $v(t)$ along the Lagrangian trajectory $t\mapsto \eta(t,x)$ with $v(t_1)=v(t_2)=0$ such that
\begin{equation}\label{localizedindexform}
I(v,v) = \int_{t_1}^{t_2} \langle \Lambda(t,x) \dot{v}(t), \dot{v}(t)\rangle + \langle \omega_0(x) \times v(t), \dot{v}(t)\rangle \, dt < 0,
\end{equation}
then $\eta(t_1)$ is conjugate to $\eta(\tau)$ for some $\tau<t_2$; in particular the geodesic $\eta$ is not minimizing
on $[t_1,t_2]$.
\end{theorem}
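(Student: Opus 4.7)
The plan is to invoke the Morse-Jacobi index theorem on $\Diffmu(M)$: if there is a divergence-free variation $W$ along the geodesic $\eta$ that vanishes at both endpoints and for which the global second-variation $\mathcal{I}(W,W)$ is strictly negative, then there exists $\tau \in (t_1,t_2)$ with $\eta(\tau)$ conjugate to $\eta(t_1)$, so $\eta$ fails to minimize on $[t_1,t_2]$. Hence it suffices to realize the given local form $I(v,v)<0$ as the leading-order global index form $\mathcal{I}(W_\epsilon,W_\epsilon)$ of a family of test fields $W_\epsilon$ concentrated near the single particle trajectory $t\mapsto \eta(t,x)$.

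The construction is a concentration argument that is genuinely three-dimensional. Given $v(t)$, I would pick a smooth bump $\phi_\epsilon$ supported in an $\epsilon$-disk around $x$ in a plane transverse to $u_0(x)$, and form a divergence-free field $Y_\epsilon(t,\cdot)$ on $M$ as a curl of a profile built from $\phi_\epsilon$ and $v(t)$; then set $W_\epsilon(t,\cdot) = D\eta(t)\cdot Y_\epsilon(t,\cdot)\circ \eta(t)^{-1}$. The pushforward of a curl field under a volumorphism remains divergence-free on $M$ for every $t$, and by construction $W_\epsilon$ is supported in a tube of radius $\sim\epsilon$ about the Lagrangian trajectory, with the direction encoded by $v$.

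Substituting $W_\epsilon$ into the global index form $\mathcal{I}(W_\epsilon,W_\epsilon) = \int \llangle \nabla_t W_\epsilon,\nabla_t W_\epsilon\rrangle - \llangle R(W_\epsilon,\dot\eta)\dot\eta,W_\epsilon\rrangle \, dt$, I would extract the leading asymptotics in $\epsilon$. The kinetic term, after pulling back by $\eta$ and using $D\eta^{\transpose}D\eta = \Lambda$, contributes $C_\epsilon \int \langle \Lambda(t,x)\dot v,\dot v\rangle\,dt$ for a concentration normalization $C_\epsilon>0$. For the curvature term, the Gauss-Codazzi formula \eqref{gausscodazzi} splits it into the pressure Hessian piece $\int \nabla^2 P(W_\epsilon,W_\epsilon)\,d\mu$, which is subleading in $\epsilon$ since $P$ is smooth while $W_\epsilon$ concentrates, and the Neumann piece $\int |\grad Q_\epsilon|^2\,d\mu$, where $\Laplacian Q_\epsilon = -\diver(U\cdot\grad W_\epsilon)$. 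The latter retains a leading contribution which I would match, by integration by parts and a careful expansion of $Q_\epsilon$, to the gyroscopic integral $C_\epsilon \int \langle \omega_0(x)\times v,\dot v\rangle\,dt$.

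The main obstacle is precisely this last identification. It requires expanding $U$ near the trajectory by its value plus its linearization and solving the Neumann problem with the concentrated source, then showing that only the antisymmetric part of $\grad U$, namely $\tfrac{1}{2}\omega_0$ pulled back to the label $x$, contributes to the limit; the symmetric part is absorbed into the pressure-Hessian term via the Euler equation and disappears in the concentration limit. Once this is in hand, $\mathcal{I}(W_\epsilon,W_\epsilon) = C_\epsilon \bigl(I(v,v)+o(1)\bigr)$, so $I(v,v)<0$ forces $\mathcal{I}(W_\epsilon,W_\epsilon)<0$ for sufficiently small $\epsilon$, and the Morse index theorem on $\Diffmu(M)$ produces the desired $\tau<t_2$ with $\eta(\tau)$ conjugate to $\eta(t_1)$.
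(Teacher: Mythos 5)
The theorem is not proved in this paper at all; it is quoted from \cite{prestonfirst}, so the comparison must be with the argument there, which is a localization of the index form on $\Diffmu(M)$ along a single trajectory --- your overall shape (concentrated divergence-free test fields plus ``negative index form implies a conjugate point'') is therefore the right one, and your observation that the pushforward $\eta_* Y$ of a divergence-free field is divergence-free is indeed the correct way to generate admissible variations. But your asymptotic bookkeeping fails at the two decisive points. First, the pressure-Hessian term is \emph{not} subleading under concentration: normalized by $\lVert W_\epsilon\rVert_{L^2}^2$, the quantity $\int_M \nabla^2P(W_\epsilon,W_\epsilon)\,d\mu$ converges to the pointwise value of $\nabla^2P$ at $\eta(t,x)$ applied to the limiting direction, i.e.\ it is of exactly the same order as the kinetic term and cannot be discarded. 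Second, the Neumann piece cannot produce the gyroscopic integral: by \eqref{gausscodazzi} it enters the index form as $+\int_M\lvert\grad Q_\epsilon\rvert^2\,d\mu\ge 0$, a term of definite sign, whereas $\int\langle\omega_0\times v,\dot v\rangle\,dt$ is sign-indefinite and is precisely the term that must be allowed to drive $I(v,v)$ negative. Your kinetic-term accounting has the same defect: with $W_\epsilon=\eta_*Y_\epsilon$ one has $\partial_t(W_\epsilon\circ\eta)=\partial_tD\eta\,Y_\epsilon+D\eta\,\partial_tY_\epsilon$ (and $\nabla_tW_\epsilon$ differs from this by a gradient), so besides $\langle\Lambda\,\partial_tY_\epsilon,\partial_tY_\epsilon\rangle$ there are cross terms and a stretching term $\lvert\partial_tD\eta\,Y_\epsilon\rvert^2$; these omitted terms are where the whole content of the theorem lives. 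Finally, citing ``the Morse--Jacobi index theorem on $\Diffmu(M)$'' is not free in this weak $L^2$ Riemannian setting; that implication is part of what \cite{prestonfirst} establishes.

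The missing idea is that no delicate matching of a concentrated Neumann problem is needed, because the algebra is exact. Write the variation in Lagrangian form $W\circ\eta=D\eta\,v$ with $v(t,\cdot)$ divergence-free and supported near $x$, and use the second-variation formula $I(W,W)=\int_{t_1}^{t_2}\big(\lVert\partial_t(W\circ\eta)\rVert_{L^2}^2-\int_M\nabla^2P(W,W)\,d\mu\big)\,dt$ (equivalent to your Gauss--Codazzi form after splitting $\partial_t(W\circ\eta)$ into its divergence-free and gradient parts). Replacing $\nabla^2P(\eta)\,D\eta$ by $-\partial_t^2D\eta$ via Ohkitani's equation \eqref{jacobiequation} and integrating by parts in time, the Hessian contribution cancels the stretching term $\lvert\partial_tD\eta\,v\rvert^2$ and symmetrizes the cross terms, leaving only $\dot v^{\transpose}\big(D\eta^{\transpose}\partial_tD\eta-\partial_tD\eta^{\transpose}D\eta\big)v$; this antisymmetric matrix is constant in time (differentiate and use symmetry of the Hessian), hence equals $\nabla U_0-\nabla U_0^{\transpose}$, whose action is $v\mapsto\omega_0\times v$. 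One obtains the exact identity
\begin{equation*}
I(W,W)=\int_{t_1}^{t_2}\int_M\Big(\langle\Lambda(t,y)\,\partial_tv,\partial_tv\rangle+\langle\omega_0(y)\times v,\partial_tv\rangle\Big)\,d\mu(y)\,dt,
\end{equation*}
valid for every such $v$, and the localization is then a soft continuity argument in $y$ near $x$ (together with a genuinely three-dimensional construction of divergence-free fields on a small ball pointing essentially in the prescribed direction $v(t)$), rather than an expansion of $Q_\epsilon$. So the route you propose, as written, would not close; the correct mechanism is the Ohkitani equation plus the Cauchy invariant, with the nonlocal term never needing to be analyzed.
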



In general for axisymmetric flows in the orthonormal basis $\{e_r,e_{\theta},e_z\}$ we have
$$
D\eta(t,x) = \left(\begin{matrix}
\alpha_r & 0 & \alpha_z \\
\alpha \beta_r & \alpha/r & \alpha \beta_z \\
\gamma_r & 0 & \gamma_z
\end{matrix}\right),
$$
and since \eqref{flowequation} for $\beta$ reduces to $\frac{\partial \beta}{\partial t}(t,r,z) = b_0(r,z)$ by the conservation law \eqref{Bsolution}, we have $\beta_r(t,r,z) = t(b_0)_r(r,z)$ and $\beta_z(t,r,z) = t(b_0)_z(r,z)$. Thus we have
\begin{equation}\label{Detaexplicit}
D\eta(t,0,0) = \left(\begin{matrix}
\alpha_r & 0 & 0 \\
0 & \alpha_r & 0 \\
0 & 0 & \gamma_z\end{matrix}\right),
\qquad\qquad D\eta(t,1,0) = \left(\begin{matrix}
\alpha_r & 0 & 0 \\
t(b_0)_r(1,0) & 1 & t(b_0)_z(1,0) \\
0 & 0 & \gamma_z\end{matrix}\right).
\end{equation}


We now analyze the index form  on the axis and on the boundary.
In all cases we assume the localized Beale-Kato-Majda criterion \eqref{localizedBKM}:
at the fixed point $z=0$ with either $r_0=0$ or $r_0=1$, we have
$$\int_0^T \lvert \omega\big(t,\eta(t,r_0, 0)\big)\rvert \, dt = \infty \quad \text{for\, $r_0=0$\, or\, $r_0=1$}.$$
Note that if $b_0$ is odd, then the vorticity is identically zero at the origin, and the localized BKM criterion cannot be satisfied there.
We first consider the assumptions necessary to get an infinite sequence of conjugate pairs (and thus sectional curvature increasing to
positive infinity).

\noindent
\textbf{Proof of Theorem \ref{conjugateblowupyes}.}
First consider the situation at $r=0$.
It is sufficient to show that for any $t_1>0$ there is a $t_2>t_1$ such that the index form $I(v,v)$ in \eqref{localizedindexform} can be
made negative for some $v$ vanishing at both $t_1$ and $t_2$. The initial vorticity is given by $\omega_0 = 2\mathfrak{b}_0 \, e_z$, and since it is stretched by
$\omega(t,0,0) = \gamma_z(t,0,0) \omega_0(0,0) = 2\mathfrak{b}_0/\alpha_r(t,0,0)^2$, our assumption yields
$\int_0^T dt/\alpha_r(t,0,0)^2 = \infty$. Equation \eqref{Detaexplicit} yields
$$ \Lambda(t,0,0) = D\eta(t,0,0)^{\dagger}D\eta(t,0,0) = \left(\begin{smallmatrix}
\alpha_r^2 & 0 & 0 \\
0 & \alpha_r^2 & 0 \\
0 & 0 & \gamma_z^2\end{smallmatrix}\right),
$$
so that the index form \eqref{localizedindexform} becomes, for $v(t) = f(t) e_r + g(t) e_{\theta} + h(t) e_z$,
$$ I(v,v) = \int_{t_1}^{t_2} \alpha_r^2 \dot{f}^2 + \alpha_r^2 \dot{g}^2 + \gamma_z^2 \dot{h}^2 + \mathfrak{b}_0 (f\dot{g} - g\dot{f}) \, dt.$$

Set $h\equiv 0$, integrate by parts using $v(t_1)=v(t_2)=0$, and complete the square to obtain
$$ I(v,v) = \int_{t_1}^{t_2} \big( \alpha_r \dot{g} + \tfrac{\mathfrak{b}_0}{\alpha_r} f\big)^2 + \alpha_r^2 \dot{f}^2 - \tfrac{\mathfrak{b}_0^2}{\alpha_r^2} f^2 \, dt.$$
We choose $\dot{g} = k - \mathfrak{b}_0 f/\alpha_r^2$, where $k$ is chosen so that $\int_{t_1}^{t_2} \dot{g} \, dt = 0$, and obtain
\begin{equation}\label{indexsimplified1}
I(v,v) = \frac{\mathfrak{b}_0^2}{(t_2-t_1)^2} \int_{t_1}^{t_2} \alpha_r^2 \,dt \big( \textstyle{\int_{t_1}^{t_2} \tfrac{f\,dt}{\alpha_r^2}}\big)^2 + \displaystyle \int_{t_1}^{t_2} \alpha_r^2 \dot{f}^2 - \tfrac{\mathfrak{b}_0^2}{\alpha_r^2} f^2 \, dt.
\end{equation}
For the latter integral, we rescale our time variable by $s(t)=\int_0^t d\tau/\alpha_r(\tau)^2$, and obtain
\begin{equation}\label{dominantindexaxis}
\int_{t_1}^{t_2} \alpha_r^2 \dot{f}^2 - \tfrac{\mathfrak{b}_0^2}{\alpha_r^2} f^2 \, dt = \int_{s_1}^{s_2} f'(s)^2 - \mathfrak{b}_0^2 f(s)^2 \, ds.
\end{equation}
By assumption we have $s(t)\to \infty$ as $t\to T$, so that we can certainly choose $t$ large enough so that with $f(s) = \sin{2(s-s_1)\pi/\mathfrak{b}_0}$,
the integral in \eqref{dominantindexaxis} is negative.
With this choice, the first integral in \eqref{indexsimplified1} vanishes, and we get $I(v,v)<0$.

Next we consider the situation at $r=1$.
We then have
$$ \Lambda(t,1,0) = \left(\begin{matrix}
\alpha_r^2 + \mathfrak{b}_2^2t^2  & \mathfrak{b}_2t & 0 \\
\mathfrak{b}_2t & 1 & 0 \\
0 & 0 & \gamma_z^2
\end{matrix}\right),$$
and the initial vorticity is $\omega_0(1,0) = (\mathfrak{b}_1+\mathfrak{b}_2) e_z$.
For $v(t) = f(t)e_r + g(t) e_{\theta} + h(t) e_z$,
the index form \eqref{localizedindexform} becomes
$$ I(v,v) = \int_{t_1}^{t_2} (\alpha_r^2 + \mathfrak{b}_2^2t^2) \dot{f}^2 + 2\mathfrak{b}_2 t \dot{f} \dot{g} + \dot{g}^2 + \gamma_z^2 \dot{h}^2 +
(\mathfrak{b}_1+\mathfrak{b}_2) (f\dot{g} - g\dot{f}) \, dt.$$
Set $h=0$, integrate by parts, and complete the square to obtain
\begin{equation}\label{indexformeven}
I(v,v) = \int_{t_1}^{t_2} \big( \dot{g} + (\mathfrak{b}_1+\mathfrak{b}_2) f + \mathfrak{b}_2 t \dot{f}\big)^2 + \alpha_r^2 \dot{f}^2 - \mathfrak{b}_1(\mathfrak{b}_1+\mathfrak{b}_2) f^2 \, dt.
\end{equation}
We have $\omega(t,1,0) = \tfrac{\mathfrak{b}_1+\mathfrak{b}_2}{\alpha_r(t,1,0)} e_z$ so that the blowup condition is
$\int_0^T dt/\alpha_r(t,1,0) = \infty$. If $q:=\alpha_{rt}(T,1,0)\ne 0$ then the dominant term in
\eqref{indexformeven} looks, for $t_1$ and $t_2$ sufficiently close to $T$, like
$$ \int_{t_1}^{t_2} q^2 (T-t)^2 \dot{f}(t)^2 - \tfrac{\zeta^2}{4} f(t)^2 \, dt$$
for $\zeta^2 = 4\mathfrak{b}_1(\mathfrak{b}_1+\mathfrak{b}_2)$. Minimizers of this integral subject to $f(t_1)=f(t_2)=0$
satisfy the equation $\frac{d}{dt} \big( q^2(T-t)^2 \dot{f}(t)\big) + \tfrac{\zeta^2}{4} f(t) = 0$,
with solutions
\begin{equation}\label{minimizersolutionf}
f(t) = \frac{1}{\sqrt{T-t}} \cos{(\tfrac{\psi}{2} \ln{(T-t)} + \phi)}\, \text{ for }\, \psi = \sqrt{\zeta^2/q^2-1}
\end{equation}
and some constant $\phi$, and all such solutions vanish infinitely many times up to time $T$.

As in the case above when $r=0$, we choose $\dot{g} = k - \mathfrak{b}_1 f - \mathfrak{b}_2 \tfrac{d}{dt} (tf)$ where
$k$ is chosen so that $\int_{t_1}^{t_2} \dot{g}(t) \, dt = 0$, i.e., $k = \tfrac{\mathfrak{b}_1}{t_2-t_1} \int_{t_1}^{t_2} f(t)\,dt$. Then the first
term in \eqref{indexformeven} vanishes if $\int_{t_1}^{t_2} f(t)\, dt = 0$, and we can easily choose functions $f$ of the form \eqref{minimizersolutionf}
with vanishing mean if $t_2$ is close enough to $T$.

The result on the integral curvature follows from rather general principles of comparison theory in ODEs; the following is 
adapted from the proof of Theorem 5.1 in Chapter 11 of Hartman~\cite{hartman}. Consider a Jacobi field $J(t)$ satisfying the 
equation $J''(t) = -R(\dot{\gamma}(t), J(t))\dot{\gamma}(t)$. Let $\phi(t) = \tfrac{1}{2} \lvert J(t)\rvert^2$; then we have 
$$ \phi''(t) = -2K(t) \phi(t) + \lvert J'(t)\rvert^2 \ge -2K(t) \phi(t),$$
assuming that $\dot{\gamma}(t)$ is a unit vector, where $K(t)$ is the sectional curvature in directions $J(t)$ and $\dot{\gamma}(t)$. 
On an interval $[t_n,t_{n+1}]$ where a Jacobi field vanishes, we have 
$$ (t_{n+1}-t_n) \phi(t) \le 2\int_{t_n}^t (t_{n+1}-t) (s-t_n) K(s) \phi(s) \, ds + 2\int_t^{t_{n+1}} (t-t_n) (t_{n+1}-s) K(s) \phi(s) \, ds$$ 
for any $t \in [t_n, t_{n+1}]$. If $t_0$ denotes the location of the maximum of $\phi(t)$ then using $t=t_0$ and overestimating the right side by $\phi(s)\le \phi(t_0)$, then cancelling $\phi(t_0)$, we get
$$ (t_{n+1}-t_n) \le \tfrac{1}{2} (t_{n+1}-t_n)^2 \int_{t_n}^{t_{n+1}} K(s) \, ds.$$
Therefore we have 
$$ \int_0^T K(s) \, ds \ge  \sum_{n=1}^{\infty} \int_{t_n}^{t_{n+1}} K(s) \, ds \ge \sum_{n=1}^{\infty} \frac{2}{t_{n+1}-t_n}.$$
Since the sum $\sum_{n=1}^{\infty} (t_{n+1}-t_n) \le T$ converges, the reciprocal sum must diverge, and we conclude that $\int_0^T K(t) \,dt$ 
is positive infinity. \hfill $\square$

More generally we can try to generalize the preceding computations to \emph{any} Lagrangian path along which the vorticity is
increasing to infinity, but it turns out there are not always infinitely many conjugate pairs along the trajectory. The
alternate possibility is however rare enough that we can get fairly concrete information about the growth of the components
of the stretching matrix, as in the following theorem of the first author.

\begin{theorem}\label{conjugatepoints}\cite{prestonblowup}
Assume that a solution of the 3D Euler equation \eqref{eulergeneral} on a 3D manifold $M$ has a maximal existence time $T<\infty$, and that the following strong form of the Beale-Kato-Majda criterion \eqref{BKM} holds:
\begin{equation}\label{localizedBKM}
\exists x\in M\backslash\partial M \text{ s.t. } \int_0^T \lvert \omega\big(t,\eta(t,x)\big)\rvert \, dt = \infty.
\end{equation}
Then either there is a sequence $t_n$ of times such that $\eta(t_n)$ is conjugate to $\eta(t_{n+1})$ for every $n$ (in other words, $\eta$ fails to be a locally minimizing geodesic on the interval $[t_n,t_{n+1}]$), or there is a basis $\{e_1,e_2,e_3\}$ such that $\omega_0(x)$ is parallel to $e_3$ and the components $\Lambda_{ij}$ of $\Lambda(t) = D\eta(t,x)^{\dagger}D\eta(t,x)$ satisfy
\begin{equation}\label{conjugatefailure}
\int_0^T \frac{\Lambda_{33}(t)}{\Lambda^{11}(t)+\Lambda^{22}(t)} \, dt < \infty \qquad \text{and}\qquad \lim_{t\to T} \frac{\int_0^t \Lambda^{11}(\tau)\,d\tau}{\int_0^t \Lambda^{22}(\tau)\,d\tau} = 0.
\end{equation}
\end{theorem}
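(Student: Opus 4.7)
The plan is to establish the dichotomy by contrapositive: assuming no infinite sequence of mutually conjugate points exists along $\eta$, I would derive the two integral asymptotics on the components of $\Lambda$ and $\Lambda^{-1}$. First, I fix a basis $\{e_1,e_2,e_3\}$ at $x$ with $e_3 = \omega_0(x)/|\omega_0(x)|$ and $\{e_1,e_2\}$ an orthonormal pair in $\omega_0(x)^{\perp}$, and set $W = |\omega_0(x)|$. The vorticity transport identity $\omega(t,\eta(t,x)) = D\eta(t,x)\omega_0(x)$, combined with $\det D\eta \equiv 1$, yields $|\omega(t,\eta)|^2 = W^2\Lambda_{33}(t)$, so the localized BKM hypothesis \eqref{localizedBKM} is equivalent to $\int_0^T \sqrt{\Lambda_{33}(t)}\,dt = \infty$. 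Because $\det\Lambda \equiv 1$, blowup of $\Lambda_{33}$ forces the $2\times 2$ block $A(t) := (\Lambda_{ij})_{i,j=1,2}$ to degenerate (since $\det A = \Lambda^{33}$).

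By the contrapositive of Theorem \ref{indexformtheorem}, there exists $T_* < T$ such that for every $[t_1,t_2]\subset[T_*,T)$ and every variation $v$ vanishing at the endpoints, $I(v,v)\ge 0$. The $e_3$-component of $v$ contributes only $\int\Lambda_{33}\dot h^2\,dt\ge 0$ and may be dropped. For $v=fe_1+ge_2$, using $\omega_0\times v = W(fe_2-ge_1)$ gives
\begin{equation*}
I(v,v) = \int_{t_1}^{t_2} \Lambda_{11}\dot f^2 + 2\Lambda_{12}\dot f\dot g + \Lambda_{22}\dot g^2 + W(f\dot g - g\dot f)\, dt.
\end{equation*}
Extremizing over $g$ via the Euler-Lagrange equation $\frac{d}{dt}(\Lambda_{22}\dot g + \Lambda_{12}\dot f) = -W\dot f$ and substituting back produces a reduced quadratic form in $f$ alone whose leading coefficient, after using $\det\Lambda\equiv 1$, is proportional to $\Lambda^{33}/\Lambda_{22}$ and whose potential is a negative multiple of $W^2/\Lambda_{22}$. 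The symmetric computation eliminating $f$ instead produces the same structure with $\Lambda_{11}$ in the denominator. A Sturm-Liouville non-oscillation argument applied to these two reduced problems, combined with the identity $\Lambda^{11}+\Lambda^{22} = \Lambda_{33}(\Lambda_{11}+\Lambda_{22}) - \Lambda_{13}^2 - \Lambda_{23}^2$ (valid since $\det\Lambda=1$), then converts the absence of conjugate pairs on $[T_*,T)$ into the first condition $\int_0^T \Lambda_{33}/(\Lambda^{11}+\Lambda^{22})\,dt < \infty$ of \eqref{conjugatefailure}.

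The main obstacle is the second condition $\lim_{t\to T}\int_0^t\Lambda^{11}\big/\int_0^t\Lambda^{22} = 0$, which encodes the directional nature of the degeneration of $A(t)$. To obtain it, I would choose $\{e_1,e_2\}$ within $\omega_0(x)^{\perp}$ so that $e_2$ is asymptotically aligned with the dominant eigendirection of $\Lambda^{-1}\big|_{\omega_0^{\perp}}$ as $t\to T$. The difficulty is that non-conjugacy supplies only integrated information on subintervals, not pointwise control of eigendirections, so existence of such an alignment is not automatic. I expect to argue that if the ratio of averages failed to vanish in the limit, one could construct an admissible trial field in the $(e_1,e_2)$-plane, with amplitudes tuned to the eigenvalue ratios of $A$ and an oscillation frequency forced by the non-oscillation bounds already obtained, for which $I(v,v)<0$ on some $[t_1,t_2]\subset[T_*,T)$, contradicting the non-conjugacy hypothesis. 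Making this rigorous requires bootstrapping the first condition together with $\det\Lambda \equiv 1$ to control eigenvalue ratios uniformly near $T$, and this bootstrapping is the most technical step of the proof.
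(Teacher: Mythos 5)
You should first note that this paper does not prove Theorem \ref{conjugatepoints} at all: it is imported verbatim from \cite{prestonblowup} (the first author's rigidity theorem), so there is no internal proof to compare against. The closest internal material is the localized index form of Theorem \ref{indexformtheorem} and its use in the proofs of Theorems \ref{conjugateblowupyes} and \ref{conjugateblowupno}, and your outline is consistent in spirit with that machinery: the identification $\lvert\omega(t,\eta)\rvert^2 = W^2\Lambda_{33}(t)$, the restriction of \eqref{localizedindexform} to the plane orthogonal to $\omega_0(x)$, and the appearance of the coefficient $\Lambda^{33}/\Lambda_{22}$ after eliminating $g$ are all correct. But what you have written is a plan, not a proof, and the decisive steps are missing.

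Concretely: (i) in the reduction to a scalar problem you extremize over $g$ freely, but admissible variations must have $g(t_1)=g(t_2)=0$, i.e.\ $\int_{t_1}^{t_2}\dot g\,dt=0$; this constraint produces an extra nonnegative mean term (exactly the term handled by the choice $\dot g = k - \mathfrak{b}_0 f/\alpha_r^2$ in the proof of Theorem \ref{conjugateblowupyes}) and must be disposed of before you can quote Sturm--Liouville theory. (ii) The passage from non-oscillation of your two decoupled scalar reductions (coefficients $\Lambda^{33}/\Lambda_{22}$ and potential $W^2/\Lambda_{22}$, and the symmetric one) to the specific condition $\int_0^T \Lambda_{33}/(\Lambda^{11}+\Lambda^{22})\,dt<\infty$ is only asserted. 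The identity $\Lambda^{11}+\Lambda^{22}=\Lambda_{33}(\Lambda_{11}+\Lambda_{22})-\Lambda_{13}^2-\Lambda_{23}^2$ is true, but it yields $\Lambda_{33}/(\Lambda^{11}+\Lambda^{22})\ge 1/(\Lambda_{11}+\Lambda_{22})$, an inequality in the wrong direction, so finiteness of the target integral does not follow from the scalar reductions as stated; one needs trial fields adapted to the fully coupled two-dimensional system (e.g.\ rotating directions in the $e_1e_2$-plane whose angular speed is tuned to the rotation term $W(f\dot g-g\dot f)$), not two separate eliminations. (iii) Most importantly, the second condition in \eqref{conjugatefailure} --- the existence of the distinguished basis and the alignment statement $\lim_{t\to T}\int_0^t\Lambda^{11}\big/\int_0^t\Lambda^{22}=0$ --- is the heart of the rigidity theorem, and you explicitly leave it open, offering only a hoped-for contradiction scheme with an unspecified ``bootstrapping.'' As it stands the dichotomy is therefore not established; to complete the argument you would essentially have to reconstruct the proof of \cite{prestonblowup}, and the honest course here is to cite that paper rather than claim a proof.
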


Conjugate points imply that the curvature is approaching positive infinity
, while the alternative condition allows for negative curvature but implies that the stretching matrix $\Lambda$ must have its eigenvectors aligning in fixed directions rather than rapidly rotating. In the present context we can get more explicit information along our trajectories
due to the symmetry.

\noindent
\textbf{Proof of Theorem \ref{conjugateblowupno}.}
The cases where $r_0=0$ and $r_0=1$ are similar, so we will just deal with $r_0=1$ here.
If $b_0$ is odd, then we have by \eqref{Detaexplicit} that
$$ \Lambda(t,1,0) = \left(\begin{matrix} \alpha_r^2 & 0 & 0 \\
0 & 1 & tb_3 \\
0 & t\mathfrak{b}_3 & t^2\mathfrak{b}_3^2 + \gamma_z^2\end{matrix}\right),$$
with the initial vorticity $\omega_0(1,0) = \mathfrak{b}_3 e_r$, where
$\mathfrak{b}_3 = (b_0)_z(1,0)$. Plugging $v(t) = f(t) e_r + g(t) e_{\theta} + h(t) e_z$  into
the index form \eqref{localizedindexform}, we get
$$ I(v,v) = \int_{t_1}^{t_2} \alpha_r^2 \dot{f}^2 + \big( \dot{g} + t\mathfrak{b}_3 \dot{h} + \mathfrak{b}_3 h\big)^2 + \gamma_z^2 \dot{h}^2 \, dt,$$
which is always positive.
Hence if the Jacobi field is spatially supported in a sufficiently small neighborhood of the circle $r=1$ and $z=0$, its index form in the volumorphism group will also be positive, and thus we cannot have conjugate points arising from locally-supported Jacobi fields.

By formula \eqref{pressurelapcomps}, using the fact that $P_r(t,1,0)=b(t,1,0)^2=0$ by the boundary condition
and $a(t,1,0) = a_z(t,1,0) = b(t,1,0)=0$ by our oddness assumptions and no-flow at the boundary, we get
\begin{equation}\label{oddpressurelaplacian}
\Delta P(t,1,0) = P_{rr}(t,1,0) + P_{zz}(t,1,0) = -2a_r(t,1,0)^2 = -2\,\frac{\alpha_{tr}(t,1,0)^2}{\alpha_r(t,1,0)^2}.
\end{equation}
Since the vorticity at $r=1$ and $z=0$ is $\omega(t,1,0) = -b_z(t,1,0) e_r = -\mathfrak{b}_3 \alpha_r(t,1,0) e_r$, our
assumption on the localized Beale-Kato-Majda criterion becomes $\int_0^T \alpha_r(t,1,0) \, dt = \infty$. Thus by
equation \eqref{oddpressurelaplacian} we have
$$ \int_0^T \Delta P(t,1,0) \, dt = -2 \lVert \dot{\xi}(t) \rVert^2_{L^2[0,T]} \le -C \sup_{0\le t\le T}\lvert \xi(t)\rvert^2,$$
where $\xi(t) = \ln{\alpha_r(t,1,0)}$, with the last inequality following from the Poincar\'e inequality on $[0,T]$. Hence
we must have $\int_0^T \Delta P(t,1,0)\,dt = -\infty$ or else $\xi(t)$ would be bounded, which is impossible since
$\int_0^T e^{\xi(t)}\,dt = \infty$ by assumption.

To show that the curvature also approaches negative infinity, it is enough to show that we can find a divergence-free
axisymmetric velocity field $V$ such that $K(U,V) \le C \Laplacian P(t,1,0)$ for some $C$ independent of time.
To do this we note that $K(U,V) \le C\int_M \nabla^2P(V,V) \, d\mu/\lVert V\rVert^2_{L^2(M)}$ by formula \eqref{gausscodazzi},
for some $C$ which depends on the $L^2$ norm of $U$ (which is constant in time). Thus it is enough to make $\int_M \nabla^2 P(V,V) \, d\mu$
close to $\Laplacian P(1,0)$. Let $\zeta$ be a smooth function on $\mathbb{R}$ supported in $(-1,1)$ which is odd, and
for a small $\varepsilon>0$, set $\psi(r,z) = \zeta(\frac{1-r}{\varepsilon}) \zeta(\frac{z}{\varepsilon})$. Then
$\psi$ is odd in $z$ and vanishes when $r=1$, so that the vector field
$V = -\frac{\psi_z}{r} e_r + \frac{\psi_r}{r} e_z$ is axisymmetric, divergence-free, with vertical component odd in $z$ and
tangent to the boundary $r=1$. It is easy to check that
$$ \int_M \nabla^2P(V,V) \, d\mu = \Big(\int_0^1 \zeta'(u)^2 \, du\Big) \Big(\int_0^1 \zeta(v)^2\,dv\Big)\Big(P_{rr}(1,0) + P_{zz}(1,0)\Big) + O(\varepsilon),$$
which is what we needed to show. Hence formula \eqref{infimumlaplacian} follows.\hfill$\square$

Roughly speaking, if the swirl component $b$ is even, then localized blowup may be associated to extreme
positive sectional curvature on the diffeomorphism group,
while if $b$ is odd, then it must be associated with negative curvature. Note that the condition
$\mathfrak{b}_1(\mathfrak{b}_1+\mathfrak{b}_2)>0$ is the same condition $b(r)\omega(r)>0$ at $r=1$ as the condition in \cite{washabaugh} for
an axisymmetric fluid flow $u = b(r) e_{\theta}$ to have positive sectional curvature in all directions,
so that it is natural that this condition yields infinitely many conjugate points. Curiously this
condition is also the \emph{opposite} of the condition $\mathfrak{b}_1(\mathfrak{b}_1+\mathfrak{b}_2)<0$ in
Theorem \ref{blowupiseasy} which ensures blowup.

\section{Outlook}
\label{outlook}

We have seen that there are several local criteria for the pressure Hessian that would ensure a ``local blowup,'' which is
to say a blowup of the vorticity along a single particle trajectory. Further we have analyzed the connection between this
possible blowup and the index form along the geodesic in the volumorphism group, inspired by the fact that the index form
involves precisely the quantities of vorticity and stretching which appear in the Beale-Kato-Majda criterion for blowup.
The reason the index form along the actual geodesic in the volumorphism group can be approximated by a local index form along
a single particle trajectory, and thus related to the supremum of vorticity, is precisely the lack of Fredholmness.

As such we would like to analyze other PDEs which happen to represent geodesic equations on infinite-dimensional groups,
which share the properties that their Riemannian exponential maps are smooth but not Fredholm. Such PDEs would be expected
to be good models of the phenomena we observe here for the axisymmetric 3D Euler equation. To date there are only two other
PDEs with these properties. In one dimension we have the Wunsch equation~\cite{wunsch}\cite{EKW}\cite{bauerkolevpreston}, for which all solutions
blow up in finite time~\cite{prestonwashabaugh} due to what appears to be positive sectional curvature everywhere. In
two dimensions the surface quasi-geostrophic equation (SQG) has a smooth exponential map which is not Fredholm, a recent result
of Washabaugh~\cite{washabaughSQG}. No global existence or blowup result is known for this equation, and even the analogue of
Theorem \ref{conjugatepoints} is not yet known, though it seems likely to be true. In fact all three equations are likely to have
very similar geometric structure.

\makeatletter \renewcommand{\@biblabel}[1]{\hfill#1.}\makeatother

\end{document}